\tikzstyle{none}=[inner sep=1pt]
\tikzstyle{None}=[inner sep=1pt, fill=white]
\tikzstyle{dashedcircle}=[circle, draw=gray, dashed, inner sep=6pt]
\tikzstyle{box}=[draw=black, fill=white, inner sep=.5ex, rounded corners=.1ex]
\tikzstyle{roundedbox}=[draw=black, fill=white, inner sep=.5ex, rounded corners=1ex]
\tikzstyle{cross}=[preaction={draw=white, -, line width=3pt}]
\tikzstyle{arrow}=[postaction=decorate]
\newcommand{\markat}{0.5}
\newcommand{\markwithsym}{>}
\newcommand{\markwith}{{\arrow[black]{\markwithsym}}}
\tikzset{decoration={markings, mark=at position \markat with \markwith}}
\newcommand\xrsquigarrow[1]{%
  \mathrel{%
    \begin{tikzpicture}[baseline= {( $ (current bounding box.south) + (0,-0.5ex) $ )}]
      \node[inner sep=.5ex] (a) {$\scriptstyle #1$};
      \path[draw,
      <-,
      >=stealth,
      decorate,
      decoration={zigzag,amplitude=0.7pt,segment length=1.2mm,pre=lineto,pre length=4pt}] 
      (a.south east) -- (a.south west);
    \end{tikzpicture}}%
}
\newcommand\nthalias[1]{\AddToHook{env/#1/begin}{\crefalias{lemma}{#1}}}
\crefname{section}{Section}{Sections}
\crefname{subsection}{\S}{\S\S}
\crefname{subsubsection}{\S}{\S\S}
\theoremstyle{plain}
\newtheorem{lemma}{Lemma}[section]
\newtheorem{proposition}[lemma]{Proposition}
\newtheorem{theorem}[lemma]{Theorem}
\theoremstyle{plain}
\theoremstyle{plain}
\newtheorem{remark}[lemma]{Remark}
\newtheorem{remarks}[lemma]{Remarks}
\newtheorem{notation}[lemma]{Notation}
\crefname{definition}{definition}{definitions}
\crefname{ex}{example}{examples}
\crefname{exs}{example}{examples}
\crefname{remark}{remark}{remarks}
\crefname{remarks}{remark}{remarks}
\crefname{convention}{convention}{conventions}
\crefname{notation}{notation}{notations}
\crefname{table}{table}{tables}
\crefname{lemma}{lemma}{lemmas}
\crefname{proposition}{proposition}{propositions}
\crefname{propositionN}{proposition}{propositions}
\crefname{corollary}{corollary}{corollaries}
\crefname{corollaryN}{corollary}{corollaries}
\crefname{theorem}{theorem}{theorems}
\crefname{theoremN}{theorem}{theorems}
\crefname{enumi}{}{}
\crefname{assumption}{assumption}{Assumptions}
\crefname{construction}{construction}{Constructions}
\crefname{question}{question}{Questions}
\crefname{equation}{}{}
\numberwithin{equation}{section}
\renewcommand{\theequation}{\thesection-\arabic{equation}}
\theoremstyle{nonumberplain}
\newtheorem{proof}{Proof}
\newcommand\pf[1]{\newtheorem{#1}{Proof of \Cref{#1}}}
\newcommand\bC{{\mathbb C}}
\newcommand\cR{{\mathcal R}}
\newcommand\cV{{\mathcal V}}
\newcommand\cW{{\mathcal W}}
\newcommand\cX{{\mathcal X}}
\newcommand\cY{{\mathcal Y}}
\newcommand\cZ{{\mathcal Z}}
\newcommand\wt{\widetilde}
\DeclareMathOperator{\End}{\mathrm{End}}
\DeclareMathOperator{\Hom}{\mathrm{Hom}}
\DeclareMathOperator{\im}{\mathrm{im}}
\DeclareMathOperator{\tr}{tr}
\DeclareMathOperator{\At}{\mathrm{At}}
\DeclareMathOperator{\ev}{\mathrm{ev}}
\DeclareMathOperator{\db}{\mathrm{db}}
\DeclareMathOperator{\Ann}{\mathrm{Ann}}
\newcommand{\id}{\mathrm{id}}
\newcommand\numberthis{\addtocounter{equation}{1}\tag{\theequation}}
\newcommand{\cat}[1]{\textsc{#1}}
\newcommand{\qedhere}{\mbox{}\hfill\ensuremath{\blacksquare}}
\title{A new characterization of Kac-type discrete quantum groups}
\author{Alexandru Chirvasitu and Andre Kornell}
\renewcommand{\:}{\colon}
\newcommand{\iso}{\cong}
\newcommand{\CC}{\mathbb C}
\renewcommand{\tensor}{\mathbin{\otimes}}
\newcommand{\stensor}{\mathbin{\overline \otimes}}
\begin{document}

\date{}

\newcommand{\Addresses}{{
  \bigskip
  \footnotesize

  \textsc{Department of Mathematics, University at Buffalo}
  \par\nopagebreak
  \textsc{Buffalo, NY 14260-2900, USA}  
  \par\nopagebreak
  \textit{E-mail address}: \texttt{achirvas@buffalo.edu}

  \medskip
  
  \textsc{Department of Mathematical Sciences, New Mexico State University}
  \par\nopagebreak
  \textsc{Las Cruces, NM 88003, USA}
  \par\nopagebreak
  \textit{E-mail address}: \texttt{kornell@nmsu.edu}

}}

\maketitle

\begin{abstract}
We obtain two related characterizations of discrete quantum groups and discrete quantum groups of Kac type as allegorical group objects in the symmetric monoidal dagger category of quantum sets and relations, of interest to quantum predicate logic and quantum information theory. Specifically, we characterize discrete quantum groups by the existence of an inversion relation and discrete quantum groups of Kac type by the existence of an inversion function. This confirms a conjectured description of discrete quantum groups of Kac type and brings them within the purview of category-internal universal algebra. 
\end{abstract}

\noindent {\em Key words:
  Kac type;
  antipode;
  diagonal state;
  discrete quantum group;
  hereditarily atomic;
  quantum function;
  quantum relation;
  quantum set
}

\vspace{.5cm}

\noindent{MSC 2020: 46L67; 20G42; 46L89; 81P10; 46L30; 18D15; 18M10; 18M05

  
}


\section*{Introduction}

Discrete quantum groups emerged from the study of compact quantum groups \cite[section~3]{zbMATH04152742}. The definition that is now standard was given by Van Daele \cite[Definition~2.3]{zbMATH00869786}. He defined a \emph{discrete quantum group} to be a multiplier Hopf $*$-algebra $(A, \Delta)$ in the sense of \cite[Definition~2.4]{zbMATH00569708} such that $A$ is a direct sum of full matrix algebras. Vaes recently established another characterization of discrete quantum groups \cite{362309, zbMATH07828323}. To state this characterization succinctly, we recall a few basic notions.

First, a von Neumann algebra is
\begin{itemize}[wide]
\item \emph{atomic} \cite[\S 10.21]{strat} if the supremum of the minimal projections is the unit $1$,
\item \emph{hereditarily atomic} \cite[Definition 5.3]{zbMATH07287276} if every von Neumann subalgebra is atomic. 
\end{itemize}
The latter class coincides \cite[Proposition 5.4]{zbMATH07287276} with the class of categorical von Neumann products
\begin{equation}\label{eq:prod.dec}
  \begin{aligned}
    \prod^{W^*}_{i\in I}M_{n_i}
    &:=
      \left\{(x_i)_{i\in I}\ :\ x_i\in M_{n_i},\,\sup_i\|x_i\|<\infty\right\}\\
    M_{n_i}
    &:=\text{matrix algebra }M_{n_i}(\bC)
  \end{aligned}
\end{equation}
(this is the \emph{$\ell^{\infty}$-sum} of the factors $M_{n_i}$, in the language of \cite[\S 1.1, p.63]{hlmsk_fa}). The primary motivation for considering hereditarily atomic von Neumann algebras is that they form a quantum generalization of discrete quantum spaces, i.e, sets \cite[\S~I]{zbMATH07287276}.

Hereditarily atomic von Neumann algebras may be characterized by the non-degeneracy of their diagonal projections. For each von Neumann algebra $M$,
\begin{itemize}
\item the \emph{diagonal projection} is the maximum projection $\delta_M \in M \stensor M^{op}$ that is orthogonal to $p \tensor (1-p)$ for each projection $p \in M$,
\item a \emph{diagonal state} is a normal state $\mu\:M \stensor M^{op} \to \CC$ such that $\mu(p \tensor (1-p)) = 0$ for each projection $p \in M$.
\end{itemize}
In other words, a diagonal state is any normal state that is supported on the diagonal projection. It can be shown that $M$ is hereditarily atomic iff its diagonal projection $\delta_M$ is non-degenerate in the sense that it is not orthogonal to $p \tensor p$ for any nonzero projection $p \in M$ \cite[Proposition~A.1.2]{zbMATH07287276}.

We may define a \emph{discrete quantum monoid} to be a hereditarily atomic von Neumann algebra $M$ that is equipped with $W^*$-morphisms, i.e., unital normal $*$-homomorphisms
  \begin{equation*}
    M\xrightarrow{\quad\Delta\quad}
    M\stensor M
    \quad\text{and}\quad
    M\xrightarrow{\quad\varepsilon\quad}\bC
  \end{equation*}
such that
  \begin{equation*}
    (\Delta \stensor \id)\circ \Delta = (\id \stensor \Delta) \circ \Delta
    ,\quad
    (\varepsilon \stensor \id) \circ \Delta = \id,
    \quad\text{and}\quad
    (\id \stensor \varepsilon) \circ \Delta = \id.
  \end{equation*}
We may then paraphrase Vaes's results as follows.
\begin{theorem}[Vaes]\label{thm:Vaes}
A discrete quantum monoid $(M, \Delta, \varepsilon)$ is a discrete quantum group iff
\begin{enumerate}[(1),wide]
\item\label{item:thm:Vaes:mul} for every normal state $\mu$ on $M$, there exists a normal state $\nu$ on $M$ with $(\mu \stensor \nu)(\Delta(e)) > 0$,
\item\label{item:thm:Vaes:mur} for every normal state $\nu$ on $M$, there exists a normal state $\mu$ on $M$ with $(\mu \stensor \nu)(\Delta(e)) > 0$,
\end{enumerate}
where $e$ is the support projection of $\varepsilon$.
\end{theorem}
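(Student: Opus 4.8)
The plan is to pass to the algebraic core of $M$ and then appeal directly to Van Daele's definition of a discrete quantum group. Write $M=\prod^{W^*}_{i\in I}M_{n_i}$ as in \eqref{eq:prod.dec} and let $A:=\bigoplus_{i\in I}M_{n_i}\subseteq M$ be the algebraic direct sum; it is a $\sigma$-weakly dense $*$-subalgebra of $M$ with local units (the block identities), and, since $M\stensor M=\prod^{W^*}_{(i,j)}(M_{n_i}\tensor M_{n_j})$, the maps $\Delta$ and $\varepsilon$ carry $A$ into $M(A\odot A)$ and $\CC$. By \cite[Definition~2.3]{zbMATH00869786}, $(M,\Delta,\varepsilon)$ is a discrete quantum group exactly when $(A,\Delta)$ is a multiplier Hopf $*$-algebra, and — the coassociativity and counit axioms being given — this reduces to the assertion that the canonical maps
\[
  T_1(a\tensor b)=\Delta(a)(1\tensor b),\qquad T_2(a\tensor b)=(a\tensor 1)\Delta(b)
\]
together with their left-handed variants take values in $A\odot A$ and are bijective from $A\odot A$ to itself. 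So the task is to show that hypotheses (1) and (2) of the theorem are equivalent to this bijectivity.

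First I would reformulate the two hypotheses geometrically, in terms of $\Delta(e)$ regarded as the candidate graph of inversion. Since $\varepsilon$ is a unital normal $*$-homomorphism to $\CC$, it is the compression onto a one-dimensional block, so $e$ is a minimal \emph{central} projection; the counit axioms then yield
\[
  (1\stensor e)\,\Delta(x)\,(1\stensor e)=x\stensor e,\qquad (e\stensor 1)\,\Delta(x)\,(e\stensor 1)=e\stensor x\qquad(x\in M),
\]
so that $\Delta(e)(1\stensor e)$ and $(e\stensor 1)\Delta(e)$ are partial isometries with initial projection $e\stensor e$ and final projection under $\Delta(e)$. Using that every nonzero projection of $M$ is the support of some normal state and that slices of a normal product by a faithful normal state are faithful positive maps, one checks that hypothesis (1) holds iff $\Delta(e)$ has \emph{full left leg support} — no nonzero projection $p\in M$ satisfies $(p\stensor 1)\Delta(e)=0$ — and hypothesis (2) holds iff $\Delta(e)$ has full right leg support. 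Thus the theorem asserts that a discrete quantum monoid is a group precisely when the candidate inversion relation $\Delta(e)$ is total and surjective.

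The forward implication is then straightforward. If $(M,\Delta,\varepsilon)$ is a discrete quantum group, index the blocks by $\mathrm{Irr}(\bG)$ for the dual compact quantum group $\bG$; the $(y,\overline y)$-component of $\Delta(e)$ in $M_{n_y}\tensor M_{n_{\overline y}}$ is the rank-one projection onto the invariant vector of $H_y\tensor H_{\overline y}$, and this vector, being a nondegenerate solution of the conjugate equations for $y$, has full support in each tensor leg. As $y$ ranges over $\mathrm{Irr}(\bG)$ these components exhaust both leg supports of $\Delta(e)$, so (1) and (2) follow.

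For the converse — the heart of the matter — I would use $\Delta(e)$ to manufacture the missing Hopf data on $A$. Reading the partial isometries above across the block decomposition and gluing them by coassociativity produces a candidate antipode $S\colon A\to A$, carrying each matrix block into the blocks that $\Delta(e)$ reaches from it; hypothesis (1) is what makes $S$ everywhere defined, hypothesis (2) what makes it surjective, and together with coassociativity they also force the one-sided products $\Delta(a)(1\tensor b)$ and their variants to already lie in $A\odot A$. One then verifies the antipode identities $m(S\tensor\id)\Delta=\varepsilon(\,\cdot\,)1=m(\id\tensor S)\Delta$ by a diagram chase combining coassociativity, the counit axioms, and the displayed relations for $\Delta(e)$; in the language of the introduction, the ``unit'' $\varepsilon$ and the ``associativity'' of $\Delta$ supply one zig-zag identity for the relation $\Delta(e)$ while totality and surjectivity supply the other. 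Once $(A,\Delta)$ is seen to be a multiplier Hopf $*$-algebra with $A$ a direct sum of matrix algebras it is a discrete quantum group by definition, and a routine completion argument recovers $(M,\Delta,\varepsilon)$ as its von Neumann algebra. I expect the real obstacle to lie precisely here — in promoting ``total $+$ surjective relation'' to ``invertible antipode'', i.e.\ in deducing the bijectivity of $T_1$ and $T_2$ from the two support conditions, with injectivity (which should come from a counit-and-coassociativity argument) being the easier half.
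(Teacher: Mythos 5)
You should first be aware that the paper does not prove this statement at all: it is labelled ``(Vaes)'' and is presented as a paraphrase of results taken from \cite{362309} and \cite{zbMATH07828323}, so there is no in-paper argument to compare yours against. Judged on its own terms, your proposal gets the easy parts right and leaves the hard part as a declared hope. The reformulation of conditions (1) and (2) as fullness of the two leg supports of $\Delta(e)$ is correct (since $\Delta(e)$ is positive, $(\mu\stensor\nu)(\Delta(e))=0$ for all $\nu$ exactly when $\Delta(e)(p_\mu\otimes 1)=0$), the observation that $e$ is a minimal central projection and that $\Delta(e)(1\stensor e)$, $(e\stensor 1)\Delta(e)$ are partial isometries with initial projection $e\stensor e$ is correct, and the forward implication via the invariant vectors of $H_y\tensor H_{\bar y}$ and the conjugate equations is a sound standard argument.

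The genuine gap is the converse, which is the entire content of Vaes's theorem. Your text for this direction --- ``reading the partial isometries across the block decomposition and gluing them by coassociativity produces a candidate antipode,'' followed by ``one then verifies the antipode identities by a diagram chase'' --- is a restatement of the goal, not a proof. Concretely, three things are asserted without argument: (i) that the one-sided products $\Delta(a)(1\tensor b)$ land in the algebraic tensor square $A\odot A$ (this is a nontrivial finiteness statement about which blocks $(i,j)$ carry a nonzero component of $\Delta(a)$, i.e.\ finiteness of the fusion rules, and does not follow from coassociativity alone); (ii) that the partial isometries coming from $\Delta(e)$ assemble into a well-defined anti-homomorphism $S$ on $A$; and (iii) that the support conditions force surjectivity of the canonical maps $T_1,T_2$. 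You correctly identify (iii) as ``the real obstacle,'' but identifying the obstacle is not overcoming it: Vaes's argument has to convert the two leg-support conditions into, for each block $y$, the existence of a conjugate block $\bar y$ together with solutions of the conjugate equations (equivalently, rigidity of the corepresentation category of the dual compact quantum group), and none of that machinery appears in your sketch. As it stands the proposal proves one implication and outlines a plausible but unexecuted strategy for the other.
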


Some remarks are necessary to interpret this result in the context of Van Daele's definition of a discrete quantum group. When we say that $(M, \Delta, \varepsilon)$ is a discrete quantum group, we mean that $(M_{00}, \Delta)$ is a multiplier Hopf $*$-algebra and that $\varepsilon$ is its unique counit. Here, $M_{00}$ is the $*$-subalgebra of $M$ that is generated by its minimal projections. If $M$ is the von Neumann product $\Pi_{i \in I}^{W^*} M_{n_i}$, then $M_{00}$ is the algebraic direct sum $\bigoplus_{i \in I} M_{n_i}$, and its multiplier algebra is the algebraic product $\mathrm{Mult}(M_{00}) = \Pi_{i \in I} M_{n_i}$. Hence, $M \subseteq \mathrm{Mult}(M_{00})$, and similarly, $M \stensor M \subseteq \mathrm{Mult}(M_{00} \tensor M_{00})$. Thus, $\Delta$ and $\varepsilon$ restrict to maps
\begin{equation*}
  \begin{aligned}
    M_{00}
    &\xrightarrow[\quad\text{$*$-morphism}\quad]{\quad\Delta\quad}
      \mathrm{Mult}(M_{00}\otimes M_{00}),\\
    M_{00}
    &\xrightarrow[\text{$*$-morphism}]{\quad\varepsilon\quad}
      \bC,
  \end{aligned}  
\end{equation*}
and \Cref{thm:Vaes} characterizes those discrete quantum monoids $(M, \Delta, \varepsilon)$ such that $(M_{00}, \Delta)$ is a multiplier Hopf $*$-algebra.

Conversely, every multiplier Hopf $*$-algebra $(A, \Delta)$ with $A$ being a direct sum of full matrix algebras, i.e., every discrete quantum group in the sense of Van Daele, arises in this way. Indeed, in this case, the bounded elements of the multiplier $*$-algebra $\mathrm{Mult}(A)$ form a hereditarily atomic von Neumann algebra $M$, and $M_{00} \iso A$. Furthermore, the comultiplication $\Delta$ and its unique counit $\varepsilon$ have ranges in $M \stensor M$ and $\CC$, respectively, by spectral permanence and lift uniquely to $W^*$-morphisms $M \to M \stensor M$ and $M \to \CC$, respectively, because they are non-degenerate and $M$ is the enveloping von Neumann algebra of the $C^*$-completion of $A$. Thus, every discrete quantum group in the sense of Van Daele may be regarded as a discrete quantum monoid, and \Cref{thm:Vaes} provides an alternative characterization of this class.

The present article provides a similar characterization of discrete quantum groups of Kac type.

\begin{theorem}\label{thm:us}
  A discrete quantum monoid $(M, \Delta, \varepsilon)$ is a discrete quantum group of Kac type iff there exists a $W^*$-morphism
  \begin{equation*}
    M\xrightarrow{\quad s\quad}M^{op}
  \end{equation*}
  such that
  \begin{enumerate}[(1),wide]
  \item\label{item:thm:us:l} $\varphi \circ (s \stensor \id) \circ \Delta = \varepsilon$ for all diagonal states $\varphi\: M^{op} \stensor M \to \CC$,
  \item\label{item:thm:us:r} $\varphi \circ (\id \stensor s) \circ \Delta = \varepsilon$ for all diagonal states $\varphi\: M \stensor M^{op} \to \CC$.
  \end{enumerate}
\end{theorem}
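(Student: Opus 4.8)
\emph{Forward implication.} Write $M=\prod^{W^*}_{i\in I}M_{n_i}$, $M_{00}=\bigoplus_{i\in I}M_{n_i}$, and let $S$ be the antipode of the multiplier Hopf $*$-algebra $(M_{00},\Delta)$; recall that there is an involution $i\mapsto\bar\imath$ of $I$ with $n_{\bar\imath}=n_i$ along which $S$ carries $M_{n_i}$ bijectively onto $M_{n_{\bar\imath}}$, and that ``Kac type'' means exactly that $S$ is $*$-preserving (equivalently $S^2=\id$). Each $S|_{M_{n_i}}\colon M_{n_i}\to M_{n_{\bar\imath}}$ is then a $*$-anti-isomorphism of matrix algebras, hence isometric, so $S$ extends to a normal $*$-anti-automorphism of $M$ and $s(a):=S(a)^{op}$ defines a $W^*$-morphism $M\to M^{op}$. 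For \ref{item:thm:us:l}, every diagonal state of $M^{op}\stensor M$ is a countable convex combination $\sum_i\lambda_i\varphi_i$ of the unique diagonal states $\varphi_i$, where $\varphi_i$ is supported on the rank-one diagonal projection of the $i$-th block and $\varphi_i(x^{op}\tensor y)=n_i^{-1}\tr\big((xy)_i\big)$ --- ``normalised trace of the product of the two legs''. As $s,\Delta,\varepsilon,\varphi_i$ are normal and $M_{00}$ is $\sigma$-weakly dense, it suffices to test on $a\in M_{00}$; there $\varphi_i\big((s\stensor\id)\Delta(a)\big)$ unwinds to $n_i^{-1}\tr\big((m(S\tensor\id)\Delta(a))_i\big)$, which by the antipode identity $m(S\tensor\id)\Delta(a)=\varepsilon(a)1$ in $\mathrm{Mult}(M_{00})$ equals $n_i^{-1}\cdot\varepsilon(a)n_i=\varepsilon(a)$; summing against the $\lambda_i$ gives \ref{item:thm:us:l}, and \ref{item:thm:us:r} is the mirror statement via $m(\id\tensor S)\Delta(a)=\varepsilon(a)1$.

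\emph{Backward implication, step 1: it is a discrete quantum group.} Suppose such an $s$ exists. Evaluating \ref{item:thm:us:l} at the support projection $e$ of $\varepsilon$ and using $\varepsilon(e)=1$, every diagonal state $\varphi$ of $M^{op}\stensor M$ has $\varphi\big((s\stensor\id)\Delta(e)\big)=1$; since such states are precisely those supported on $\delta_{M^{op}}$ and are faithful on the corresponding one-dimensional compressions, the projection $(s\stensor\id)\Delta(e)$ must dominate $\delta_{M^{op}}$, and likewise $(\id\stensor s)\Delta(e)\geq\delta_M$. I would then verify Vaes's criterion (\Cref{thm:Vaes}). Given a normal state $\mu$ on $M$ with support $p$, non-degeneracy of $\delta_M$ (\cite[Proposition~A.1.2]{zbMATH07287276}) says $\delta_M$ is not orthogonal to $p\tensor p^{op}\leq p\tensor 1$, so a routine slicing argument produces a normal state $\rho$ on $M^{op}$ with $(\mu\stensor\rho)(\delta_M)>0$; then $\nu:=\rho\circ s$ is a normal state on $M$ and $(\mu\stensor\nu)(\Delta(e))=(\mu\stensor\rho)\big((\id\stensor s)\Delta(e)\big)\geq(\mu\stensor\rho)(\delta_M)>0$, which is \ref{item:thm:Vaes:mul}; condition \ref{item:thm:Vaes:mur} follows symmetrically from $(s\stensor\id)\Delta(e)\geq\delta_{M^{op}}$. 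Hence $(M,\Delta,\varepsilon)$ is a discrete quantum group; let $S_0$ denote its antipode on $M_{00}$.

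\emph{Backward implication, step 2: it is of Kac type --- and the main obstacle.} Since $S_0$ is $*$-preserving iff $S_0^2=\id$ iff $(M,\Delta,\varepsilon)$ is of Kac type, it suffices to show that $S_0$ is $*$-preserving, and the plan is to do this by identifying $s$ with $S_0$: to show that $s$ restricts on $M_{00}$ to a linear anti-isomorphism $S$ with $S=S_0$, so that $S_0$ inherits $*$-preservation from the $W^*$-morphism $s$. First one checks, from $s(1)=1$ and $(s\stensor\id)\Delta(e)\geq\delta_{M^{op}}$ together with the block decomposition $\Delta(e)=\sum_i P_i$ (with $P_i$ the rank-one projection onto the invariant vector of the $(i,\bar\imath)$-block), that $s$ is injective and carries $M_{n_i}$ onto $M_{n_{\bar\imath}}^{op}$, so that $s(M_{00})\subseteq M_{00}^{op}$; then one promotes \ref{item:thm:us:l}--\ref{item:thm:us:r} to the genuine antipode identities $m(S\tensor\id)\big(\Delta(a)(1\tensor b)\big)=\varepsilon(a)b$ and $m(\id\tensor S)\big((a\tensor 1)\Delta(b)\big)=\varepsilon(b)a$ on $M_{00}$ --- using coassociativity and the bijectivity of the maps $T_1(a\tensor b)=\Delta(a)(1\tensor b)$ and $T_2(a\tensor b)=(a\tensor 1)\Delta(b)$ afforded by step 1 --- whence $S=S_0$ by uniqueness of the antipode of a multiplier Hopf algebra. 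I expect the promotion step to be the crux: because $\delta_{M^{op}}$ and $\delta_M$ are one-dimensional in every matrix block, conditions \ref{item:thm:us:l}--\ref{item:thm:us:r} a priori record only a single scalar per block of $m(S\tensor\id)\Delta(a)$, so recovering the full value $\varepsilon(a)1$ genuinely requires feeding $\Delta$ into itself and playing \ref{item:thm:us:l} against \ref{item:thm:us:r}, all while handling the fact that the legs of $(S\tensor\id)\Delta(a)$ need not be bounded until one knows Kac-ness. (Conceptually this is the statement that the graph of $s$ is an inversion relation for the allegorical group structure on $M$ and that, being a function, it witnesses Kac type; the computations above are the concrete residue of that picture, and one could alternatively organise the argument around a prior relational characterisation of discrete quantum groups.)
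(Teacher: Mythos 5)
Your forward direction and Step 1 of your backward direction are correct and essentially coincide with the paper's route: Step 1 is \Cref{le:supp.proj.dom} (reduction to Vaes's criterion via the projection inequalities $((s\stensor\id)\circ\Delta)(e)\ge\delta_{M^{op}}$ and $((\id\stensor s)\circ\Delta)(e)\ge\delta_M$), and your forward computation is the paper's observation that the Van Daele antipode identities imply conditions (1)--(2) once the $*$-preserving antipode is extended to a $W^*$-morphism (your unproved classification of diagonal states is the paper's \Cref{pr:diag.st}). The genuine gap is in Step 2. You correctly identify the crux --- promoting the one-scalar-per-block information $\tr_i\bigl(\mathrm{mult}_i\bigl(((\id\stensor s)\circ\Delta)(a)\bigr)\bigr)=\varepsilon(a)$ to the full identity $\mathrm{mult}\bigl(((\id\stensor s)\circ\Delta)(a)\bigr)=\varepsilon(a)1$ --- but you then only announce a plan (``feeding $\Delta$ into itself and playing (1) against (2)'', coassociativity, bijectivity of $T_1,T_2$) and explicitly flag it as an expectation rather than carrying it out. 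Since everything else in the backward direction is routine once this identity is available, the proof is incomplete precisely at its central point.

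The promotion can be done without iterating $\Delta$ and without playing the two conditions off each other; each condition separately yields its own full antipode identity. Because $\varepsilon$ is a character on a product of matrix algebras, its support $e$ is a \emph{central} minimal projection with $me=\varepsilon(m)e$ for all $m\in M$. Writing $\psi:=(\id\stensor s)\circ\Delta$, it follows that $\psi(m)\psi(e)=\psi(me)=\varepsilon(m)\psi(e)$, so every vector in the range of the projection $\psi(e)$ is a joint eigenvector: $\psi(m)\xi=\varepsilon(m)\xi$ for all $m$. Your Step 1 gives $\psi(e)\ge\delta_M$, and in the standard representation of each diagonal block the range of $\delta_M$ is spanned by $\db_\beta=\sum_k e_k\otimes e_k^*\in X_\beta\otimes X_\beta^*\cong L(X_\beta)$, on which $M\stensor M^{op}$ acts by $(a\otimes b)\triangleright x=axb$; the eigenvector condition $\psi(m)\triangleright\db_\beta=\varepsilon(m)\db_\beta$ is then exactly $\mathrm{mult}(\psi(m))=\varepsilon(m)1$, the Van Daele identity characterizing the antipode, so $s|_{M_{00}}$ is the antipode and hence a $*$-map. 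This also dissolves your worry about unbounded legs: one works with the bounded $W^*$-morphism $s$ throughout and identifies it with the antipode only at the very end. The paper reaches the same conclusion by packaging this computation in the quantum-relation formalism: \Cref{le:trnsl.scond} converts conditions (1)--(2) into the projection inequalities above, \Cref{pr:proj.compat.mor} identifies $\psi(e)$ with the projection of the relation $\hat\varepsilon^{\dag}\circ\hat\psi$, and the proof of \Cref{th:is.kac.bis} reads off the eigenvector condition from the intertwiner description of $\hat\psi$ in \Cref{re:intertw}.
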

\noindent We say that a discrete quantum group is of \emph{Kac type} if its antipode \cite[\S 4]{zbMATH00869786} is a $*$-map. Such discrete quantum groups are dual to compact quantum groups of Kac type, which are typically defined \cite[post Proposition 1.7.9]{NeTu13} by the condition that their Haar states are tracial.

Conditions \Cref{item:thm:us:l} and \Cref{item:thm:us:r} of \Cref{thm:us} entail those of \Cref{thm:Vaes} (see \Cref{le:supp.proj.dom} below), so the latter renders \Cref{thm:us} equivalent to the following ostensibly weaker version.

\begin{theorem}\label{th:is.kac.bis}
  A discrete quantum group $(M, \Delta, \varepsilon)$ is of Kac type iff there exists a $W^*$-morphism $s\: M \to M^{op}$ such that
  such that
  \begin{enumerate}[(1),wide]
  \item\label{item:th:is.kac.bis:l} $\varphi \circ (s \stensor \id) \circ \Delta = \varepsilon$ for all diagonal states $\varphi\: M^{op} \stensor M \to \CC$,
  \item\label{item:th:is.kac.bis:r} $\varphi \circ (\id \stensor s) \circ \Delta = \varepsilon$ for all diagonal states $\varphi\: M \stensor M^{op} \to \CC$.
  \end{enumerate}
\end{theorem}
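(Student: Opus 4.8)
The plan is to unwind the diagonal-state conditions into concrete algebraic identities on the dense Hopf $*$-subalgebra $M_{00} = \bigoplus_i M_{n_i}$ and match them with the defining identities of the antipode of a multiplier Hopf $*$-algebra. Write $M = \prod_{i\in I}^{W^*} M_{n_i}$, let $\{e^i_{kl}\}$ be matrix units for $M_{n_i}$, and recall that the diagonal projection $\delta_M \in M \stensor M^{op}$ restricted to $M_{00} \tensor M_{00}^{op}$ is the ``maximally entangled'' element $\sum_{i} \sum_{k,l} e^i_{kl} \tensor (e^i_{lk})^{op}$ (suitably normalized blockwise), so that the diagonal states on $M^{op}\stensor M$ are exactly the normal states supported on this element; composing with such a state is the operation that ``contracts'' the two tensor legs along matrix units, i.e. it computes a (normalized, block-weighted) trace pairing. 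With this dictionary, the identity $\varphi\circ(s\stensor\id)\circ\Delta = \varepsilon$ for \emph{all} diagonal states $\varphi$ becomes the statement that $(s\tensor\id)\Delta(a)$, after contracting the two legs along the diagonal, equals $\varepsilon(a)$ on every block — and since this must hold for every diagonal state, and diagonal states separate points on the diagonal projection's corner, it is equivalent to the genuinely algebraic identity $m\circ(s\tensor\id)\circ\Delta = \varepsilon(\cdot)1$ in $\mathrm{Mult}(M_{00})$, i.e. exactly the antipode axiom of Van Daele. The reverse contraction gives $m\circ(\id\tensor s)\circ\Delta = \varepsilon(\cdot)1$.

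Concretely, I would proceed in the following steps. \emph{Step 1: the forward direction.} Suppose $(M,\Delta,\varepsilon)$ is a discrete quantum group of Kac type. Take $s$ to be the antipode $S$ of the multiplier Hopf $*$-algebra $(M_{00},\Delta)$; by the Kac condition $S$ is a $*$-map, and since it is the antipode it preserves boundedness and extends to a $W^*$-morphism $M \to M^{op}$ (it is anti-multiplicative, hence lands in $M^{op}$; normality follows because it is bounded on each block). Then verify \Cref{item:th:is.kac.bis:l} and \Cref{item:th:is.kac.bis:r} by the dictionary above: the antipode identities $m(S\tensor\id)\Delta = \varepsilon(\cdot)1 = m(\id\tensor S)\Delta$ are precisely what one gets after applying a diagonal state, because a diagonal state $\varphi$ on $M^{op}\stensor M$ acts on $M_{00}^{op}\tensor M_{00}$ as $x^{op}\tensor y \mapsto \sum_{\text{block } i} w_i \cdot \frac{1}{n_i}\tr_i(xy)$ for some weights $w_i \ge 0$ summing to $1$, and $\varepsilon$ is supported on the trivial block, so both sides collapse to $\varepsilon(a)$.

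\emph{Step 2: the converse.} Suppose $s\:M\to M^{op}$ is a $W^*$-morphism satisfying \Cref{item:th:is.kac.bis:l}–\Cref{item:th:is.kac.bis:r}. First observe that $s$ restricts to a map $M_{00}\to M_{00}^{op}$: a $W^*$-morphism sends minimal central projections to projections and, being normal, sends the finite-rank part into the finite-rank part (one checks $s$ sends each matrix block $M_{n_i}$ into a finite direct sum of blocks using that $s(1)=1$ and normality). Then, running the dictionary in reverse — using that diagonal states on $M^{op}\stensor M$ are weak-$*$ dense enough among functionals supported on $\delta$ to separate points there, equivalently that the family $\{\varphi\}$ detects the diagonal-contraction of any element of $M_{00}^{op}\tensor M_{00}$ — conditions \Cref{item:th:is.kac.bis:l}–\Cref{item:th:is.kac.bis:r} force the honest algebraic identities $m\circ(s\tensor\id)\circ\Delta = \varepsilon(\cdot)1$ and $m\circ(\id\tensor s)\circ\Delta = \varepsilon(\cdot)1$ in $\mathrm{Mult}(M_{00})$. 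These are exactly the left and right antipode axioms, so by uniqueness of the antipode in a multiplier Hopf $*$-algebra, $s|_{M_{00}} = S$; since $s$ is a $W^*$-morphism it is in particular a $*$-map, so $S$ is a $*$-map and $(M,\Delta,\varepsilon)$ is of Kac type. (That $(M_{00},\Delta)$ is already a multiplier Hopf $*$-algebra is part of the hypothesis that it is a discrete quantum group, so we may invoke uniqueness of $S$.)

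\emph{Main obstacle.} The delicate point is \emph{Step 2}'s passage from ``holds for all diagonal states $\varphi$'' to ``holds as an operator identity in $\mathrm{Mult}(M_{00})$''. One must show that evaluating an element $z \in M_{00}^{op} \tensor M_{00}$ against every diagonal state recovers enough information to pin down $m(z) \in M_{00}$ — i.e. that the diagonal-contraction map $z \mapsto m(z)$ is recovered from the scalars $\{\varphi(z)\}_\varphi$. This works because diagonal states can be pre-composed with any $W^*$-morphism $M_{00} \to \CC$, equivalently with evaluation against any state of $M_{00}$, so $\{\varphi(z)\}$ determines all values $\psi(m(z))$ for $\psi$ a normal state of $M$, hence $m(z)$ itself. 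Making this precise — identifying exactly which functionals on $M^{op}\stensor M$ arise as diagonal states and checking they implement the diagonal contraction followed by an arbitrary state of $M$ — is the technical heart; everything else is bookkeeping with matrix units and the standard multiplier Hopf $*$-algebra axioms. I also expect a minor subtlety in confirming that the $W^*$-morphism $s$ genuinely preserves $M_{00}$ in both directions (it suffices that $s$ and its expected inverse-type relation behave well on minimal projections), which is needed before the uniqueness-of-antipode argument can be invoked.
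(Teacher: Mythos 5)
Your overall strategy is sound and ends up exactly where the paper does: both arguments reduce the diagonal-state conditions to the Van Daele antipode identities $\mathrm{mult}\circ(s\tensor\id)\circ\Delta=\varepsilon(\cdot)1=\mathrm{mult}\circ(\id\tensor s)\circ\Delta$ in $\mathrm{Mult}(M_{00})$ and then read off Kac type from the antipode being a $*$-map; your ``dictionary'' for diagonal states is precisely the paper's \Cref{pr:diag.st}. However, the step you yourself flag as the technical heart is argued incorrectly. You claim that the diagonal states determine $\mathrm{mult}(z)$ because they ``can be pre-composed with evaluation against any state of $M$,'' i.e.\ that $\psi\circ\mathrm{mult}$ is a diagonal state for every normal state $\psi$. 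That is false: $\psi\circ\mathrm{mult}$ fails to be positive unless $\psi$ is tracial on each block (already on $M_2\tensor M_2^{op}$, $P\tensor Q^{op}\mapsto\langle\xi,PQ\xi\rangle$ takes non-real values for suitable projections $P,Q$ and a vector state $\xi$), and indeed \Cref{pr:diag.st} says the \emph{only} diagonal state on $M_n\tensor M_n^{op}$ is $\tr\circ\mathrm{mult}$ with the \emph{normalized} trace. Consequently your hypothesis only yields that each block of $\mathrm{mult}\bigl(((s\stensor\id)\circ\Delta)(a)\bigr)$ has normalized trace equal to $\varepsilon(a)$, which does not by itself pin down the operator, so the ``separation'' bridge to the antipode axiom collapses.

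The gap is repairable, but by a different mechanism. Since $\varepsilon$ is a character, a state agrees with $\varepsilon$ iff it vanishes on $1-e$, where $e$ is the (minimal, central) support projection of $\varepsilon$; so the hypothesis is equivalent to the single projection inequality $\psi(e)\ge\delta$ for $\psi:=(s\stensor\id)\circ\Delta$, where the $i$-th block of the diagonal projection $\delta$ is the rank-one projection onto the trace vector $\hat 1\in L^2(M_{n_i})$. Because $\psi(e)$ is a \emph{projection}, $\psi(e)\ge\delta$ forces $\psi(e)\cdot\hat 1=\hat 1$, i.e.\ $\mathrm{mult}(\psi(e))=1$, and then multiplicativity of $\psi$ together with $ae=\varepsilon(a)e$ gives
\begin{equation*}
  \mathrm{mult}(\psi(a))=\psi(a)\cdot\bigl(\psi(e)\cdot\hat 1\bigr)=\psi(ae)\cdot\hat 1=\varepsilon(a)\hat 1=\varepsilon(a)1 .
\end{equation*}
This is essentially what the paper does, phrased via \Cref{le:trnsl.scond} in the language of quantum relations, intertwiner spaces, and the coevaluations $\db_\beta$. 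A second, smaller issue: in Step 2 you assert that normality and unitality force $s(M_{00})\subseteq M_{00}$; that is not automatic (a normal unital morphism need not preserve the finite-rank part) and is also unnecessary --- it suffices, as in the paper, that $s$ restricts to a $*$-morphism $M_{00}\to M$ satisfying the two identities above, which already characterizes the antipode.
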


By way of some additional motivation beyond whatever intrinsic interest they may possess, we briefly describe the considerations that led to the statements of \Cref{thm:Vaes,thm:us}. The category of all hereditarily atomic von Neumann algebras and \emph{quantum relations} in the sense of Weaver \cite[Definition 2.1]{zbMATH06008057} behaves much like an \emph{allegory} \cite[\S 2.11]{zbMATH00045228}. Formally, it is a \emph{dagger compact quantaloid} \cite[Definition 3.1]{2504.18266v1}, and as such, it is equivalent to the category $\cat{qRel}$ of \emph{quantum sets and (binary) relations} introduced in \cite[\S III]{zbMATH07287276}. Various classes of discrete quantum structures may be defined by internalizing the allegorical definitions of their classical counterparts \cite{zbMATH07828323}. \Cref{thm:Vaes,thm:us} demonstrate that internalizing two distinct allegorical definitions of a group yields the classes of all discrete quantum groups and of discrete quantum groups of Kac type.

The dagger compact quantaloid $\cat{qRel}$ is in particular a dagger compact category, i.e. a \emph{strongly compact category} \cite[Definition 12]{MR2724650}, so it interprets the string diagrams pervasive throughout \cite{MR2724650}. Up to equivalence of dagger compact quantaloids, a discrete quantum monoid may be defined to be an object in $\cat{qRel}$ together with morphisms
\begin{equation*}
  \begin{aligned}
    \begin{tikzpicture}
      \begin{pgfonlayer}{nodelayer}
	\node [style=box] (m) at (0,-0.5) {\;$m$\;};
	\node (md) at (0, 0.5) {};
	\node (a1) at (-0.2,-0.5) {};
	\node (b1) at (0.2,-0.5) {};
	\node (X1) at (-0.2,-1.5) { };
	\node (Y1) at (0.2,-1.5) { };
      \end{pgfonlayer}
      \begin{pgfonlayer}{edgelayer}
	\draw [arrow] (m) to (md);
	\draw [arrow] (X1) to (a1);
	\draw [arrow] (Y1) to (b1);
      \end{pgfonlayer}
    \end{tikzpicture}
  \end{aligned}\,,
  \qquad\quad
  \begin{aligned}
    \begin{tikzpicture}
      \begin{pgfonlayer}{nodelayer}
        \node (cloud) at (0, -1.5){};
	\node [style=box] (e) at (0,-0.5) {$e$};
	\node (ed) at (0, 0.5) {};
      \end{pgfonlayer}
      \begin{pgfonlayer}{edgelayer}
	\draw [arrow] (e) to (ed);
      \end{pgfonlayer}
    \end{tikzpicture}
  \end{aligned}\,,
\end{equation*}
such that
\begin{equation*}
\begin{aligned}
\begin{tikzpicture}
\begin{pgfonlayer}{nodelayer}
	\node [style=box] (e) at (0,-0.5) {$e^{\phantom{\dagger}}$};
	\node [style=box] (ed) at (0, 0.5) {$e^\dagger$};
\end{pgfonlayer}
\begin{pgfonlayer}{edgelayer}
	\draw [arrow] (e) to (ed);
\end{pgfonlayer}
\end{tikzpicture}
\end{aligned}
\; \geq \;
\quad,
\qquad\quad
\begin{aligned}
\begin{tikzpicture}
\begin{pgfonlayer}{nodelayer}
	\node [style=box] (e) at (0,0.5) {$e^{\phantom{\dagger}}$};
	\node [style=box] (ed) at (0, -0.5) {$e^\dagger$};
	\node (out) at (0, 1.5) { };
	\node (in) at (0, -1.5) { };
\end{pgfonlayer}
\begin{pgfonlayer}{edgelayer}
	\draw [arrow] (in) to (ed);
	\draw [arrow] (e) to (out);
\end{pgfonlayer}
\end{tikzpicture}
\end{aligned}
\;\leq \;
\begin{aligned}
\begin{tikzpicture}
\begin{pgfonlayer}{nodelayer}
	\node (out) at (0, 1.5) { };
	\node (in) at (0, -1.5) { };
\end{pgfonlayer}
\begin{pgfonlayer}{edgelayer}
	\draw [arrow] (in) to (out);
\end{pgfonlayer}
\end{tikzpicture}
\end{aligned},
\qquad\quad
\begin{aligned}
\begin{tikzpicture}
\begin{pgfonlayer}{nodelayer}
	\node [style=box] (m) at (0,-0.5) {$m^{\phantom{\dagger}}$};
	\node [style=box] (md) at (0, 0.5) {$m^\dagger$};
	\node (a1) at (-0.2,-0.5) {};
	\node (a2) at (-0.2,0.5) {};
	\node (b1) at (0.2,-0.5) {};
	\node (b2) at (0.2,0.5) {};
	\node (X1) at (-0.2,-1.5) { };
	\node (X2) at (-0.2,1.5) { };
	\node (Y1) at (0.2,-1.5) { };
	\node (Y2) at (0.2,1.5) { };
\end{pgfonlayer}
\begin{pgfonlayer}{edgelayer}
	\draw [arrow] (m) to (md);
	\draw [arrow] (X1) to (a1);
	\draw [arrow] (a2) to (X2);
	\draw [arrow] (Y1) to (b1);
	\draw [arrow] (b2) to (Y2);
\end{pgfonlayer}
\end{tikzpicture}
\end{aligned}
\; \geq \;
\begin{aligned}
\begin{tikzpicture}
\begin{pgfonlayer}{nodelayer}
	\node (X1) at (-0.2,-1.5) { };
	\node (X2) at (-0.2,1.5) { };
	\node (Y1) at (0.2,-1.5) { };
	\node (Y2) at (0.2,1.5) { };
\end{pgfonlayer}
\begin{pgfonlayer}{edgelayer}
	\draw [arrow] (X1) to (X2);
	\draw [arrow] (Y1) to (Y2);
\end{pgfonlayer}
\end{tikzpicture}
\end{aligned},
\qquad\quad
\begin{aligned}
\begin{tikzpicture}
\begin{pgfonlayer}{nodelayer}
	\node [style=box] (m) at (0,0.5) {$m^{\phantom{\dagger}}$};
	\node [style=box] (md) at (0, -0.5) {$m^\dagger$};
	\node (out) at (0, 1.5) { };
	\node (in) at (0, -1.5) { };
	\node (a1) at (-0.2,-0.5) {};
	\node (a2) at (-0.2,0.5) {};
	\node (b1) at (0.2,-0.5) {};
	\node (b2) at (0.2,0.5) {};
\end{pgfonlayer}
\begin{pgfonlayer}{edgelayer}
	\draw [arrow] (in) to (md);
	\draw [arrow] (m) to (out);
	\draw [arrow] (a1) to (a2);
	\draw [arrow] (b1) to (b2);
\end{pgfonlayer}
\end{tikzpicture}
\end{aligned}
\;\leq \;
\begin{aligned}
\begin{tikzpicture}
\begin{pgfonlayer}{nodelayer}
	\node (out) at (0, 1.5) { };
	\node (in) at (0, -1.5) { };
\end{pgfonlayer}
\begin{pgfonlayer}{edgelayer}
	\draw [arrow] (in) to (out);
\end{pgfonlayer}
\end{tikzpicture}
\end{aligned},
\end{equation*}
expressing that $e$ and $m$ are \emph{maps}, which are also known as \emph{functions} \cite[Definition 4.1]{zbMATH07287276}, and
\begin{equation*}
  \begin{aligned}
    \begin{tikzpicture}
      \begin{pgfonlayer}{nodelayer}
	\node [style=box] (m1) at (0,-0.5) {\;$m$\;};
	\node [style=box] (m2) at (0.4, 0.5) {\;$m$\;};
	\node (a1) at (-0.2,-0.5) {};
	\node (a2) at (0.2,0.5) {};
	\node (b1) at (0.2,-0.5) {};
	\node (b2) at (0.6,0.5) {};
	\node (X) at (-0.2,-1.5) { };
	\node (Y) at (0.2,-1.5) { };
	\node (Z) at (0.6,-1.5) { };
	\node (out) at (0.4,1.5) { };
      \end{pgfonlayer}
      \begin{pgfonlayer}{edgelayer}
	\draw [arrow] (X) to (a1);
	\draw [arrow] (Y) to (b1);
	\draw [arrow] (Z) to (b2);
	\draw [arrow, out = 90, in = 270] (m1) to (a2);
	\draw [arrow, out = 90, in = 270] (m2) to (out);
      \end{pgfonlayer}
    \end{tikzpicture}
  \end{aligned}
  \; = \;
  \begin{aligned}
    \begin{tikzpicture}
      \begin{pgfonlayer}{nodelayer}
	\node [style=box] (m1) at (0,-0.5) {\;$m$\;};
	\node [style=box] (m2) at (-0.4, 0.5) {\;$m$\;};
	\node (a1) at (0.2,-0.5) {};
	\node (a2) at (-0.2,0.5) {};
	\node (b1) at (-0.2,-0.5) {};
	\node (b2) at (-0.6,0.5) {};
	\node (X) at (0.2,-1.5) { };
	\node (Y) at (-0.2,-1.5) { };
	\node (Z) at (-0.6,-1.5) { };
	\node (out) at (-0.4,1.5) { };
      \end{pgfonlayer}
      \begin{pgfonlayer}{edgelayer}
	\draw [arrow] (X) to (a1);
	\draw [arrow] (Y) to (b1);
	\draw [arrow] (Z) to (b2);
	\draw [arrow, out = 90, in = 270] (m1) to (a2);
	\draw [arrow, out = 90, in = 270] (m2) to (out);
      \end{pgfonlayer}
    \end{tikzpicture}
  \end{aligned}\,,
  \qquad\qquad
  \begin{aligned}
    \begin{tikzpicture}
      \begin{pgfonlayer}{nodelayer}
	\node [style=box] (m) at (0,0) {\;$m$\;};
	\node (a) at (-0.2,0) {};
	\node (b) at (0.2,0) {};
	\node [style = box] (X) at (-0.2,-0.8) {$e$ };
	\node (Y) at (0.2,-1.5) { };
	\node (out) at (0,1.5) { };
      \end{pgfonlayer}
      \begin{pgfonlayer}{edgelayer}
	\draw [arrow] (X) to (a);
	\draw [arrow] (Y) to (b);
	\draw [arrow, out = 90, in = 270] (m) to (out);
      \end{pgfonlayer}
    \end{tikzpicture}
  \end{aligned}
  \; = \;
  \begin{aligned}
    \begin{tikzpicture}
      \begin{pgfonlayer}{nodelayer}
	\node (out) at (0, 1.5) { };
	\node (in) at (0, -1.5) { };
      \end{pgfonlayer}
      \begin{pgfonlayer}{edgelayer}
	\draw [arrow] (in) to (out);
      \end{pgfonlayer}
    \end{tikzpicture}
  \end{aligned}
  \;= \;
  \begin{aligned}
    \begin{tikzpicture}
      \begin{pgfonlayer}{nodelayer}
	\node [style=box] (m) at (0,0) {\;$m$\;};
	\node (a) at (-0.2,0) {};
	\node (b) at (0.2,0) {};
	\node (X) at (-0.2,-1.5) { };
	\node [style = box] (Y) at (0.2,-0.8) {$e$};
	\node (out) at (0,1.5) { };
      \end{pgfonlayer}
      \begin{pgfonlayer}{edgelayer}
	\draw [arrow] (X) to (a);
	\draw [arrow] (Y) to (b);
	\draw [arrow, out = 90, in = 270] (m) to (out);
      \end{pgfonlayer}
    \end{tikzpicture}
  \end{aligned}\,,
\end{equation*}
expressing the associativity of $m$ and the fact that that $e$ is its identity. These conditions clearly define the class of all monoids when interpreted in $\cat{Rel}$, the allegory of sets and relations, and they define the class of all discrete quantum monoids when interpreted in $\cat{qRel}$ by \cite[Theorem~7.4]{zbMATH07287276}.

Classically, a group can be defined as a monoid that has all inverses or as a monoid that is equipped with an inversion map. The latter definition is common in mathematical logic \cite[\S 1.1, Example 3]{zbMATH00053151}, making a group into an algebraic structure, and in category theory \cite[\S III.6]{mcl_2e}, providing a definition internal to the category $\cat{Set}$. The former definition can be written diagrammatically as

\vspace{-4.7ex}

\begin{equation*}
  \begin{aligned}
    \begin{tikzpicture}
      \begin{pgfonlayer}{nodelayer}
	\node [style=box] (m) at (0,0) {\;$m$\;};
	\node (a) at (-0.2,0) {};
	\node (b) at (0.2,0) {};
	\node [style = none] (X) at (-0.2,-0.8) {$\bullet$ };
	\node (Y) at (0.2,-1.5) { };
	\node [style = box] (end) at (0,0.8) {$e^\dagger$};
	\node (cloud) at (0,1.5) {};
      \end{pgfonlayer}
      \begin{pgfonlayer}{edgelayer}
	\draw [arrow] (X.center) to (a);
	\draw [arrow] (Y) to (b);
	\draw [arrow, out = 90, in = 270] (m) to (end);
      \end{pgfonlayer}
    \end{tikzpicture}
  \end{aligned}
  \; = \;
  \begin{aligned}
    \begin{tikzpicture}
      \begin{pgfonlayer}{nodelayer}
	\node (cloud) at (0, 1.5) {$\phantom{\bullet}$};
	\node (in) at (0, -1.5) { };
	\node [style = none] (end) at (0, 0.5) {$\bullet$};
      \end{pgfonlayer}
      \begin{pgfonlayer}{edgelayer}
	\draw [arrow] (in) to (end.center);
      \end{pgfonlayer}
    \end{tikzpicture}
  \end{aligned},
  \qquad\qquad
  \begin{aligned}
    \begin{tikzpicture}
      \begin{pgfonlayer}{nodelayer}
	\node [style=box] (m) at (0,0) {\;$m$\;};
	\node (a) at (-0.2,0) {};
	\node (b) at (0.2,0) {};
	\node (X) at (-0.2,-1.5) { };
	\node [style = none] (Y) at (0.2,-0.8) {$\bullet$};
	\node [style = box] (end) at (0,0.8) {$e^\dagger$};
	\node (cloud) at (0,1.5) {};
      \end{pgfonlayer}
      \begin{pgfonlayer}{edgelayer}
	\draw [arrow] (X) to (a);
	\draw [arrow] (Y.center) to (b);
	\draw [arrow, out = 90, in = 270] (m) to (end);
      \end{pgfonlayer}
    \end{tikzpicture}
  \end{aligned}
  \; = \;
  \begin{aligned}
    \begin{tikzpicture}
      \begin{pgfonlayer}{nodelayer}
	\node (cloud) at (0, 1.5) {$\phantom{\bullet}$};
	\node (in) at (0, -1.5) { };
	\node [style = none] (end) at (0, 0.5) {$\bullet$};
      \end{pgfonlayer}
      \begin{pgfonlayer}{edgelayer}
	\draw [arrow] (in) to (end.center);
      \end{pgfonlayer}
    \end{tikzpicture}
  \end{aligned},
\end{equation*}
where the ``loose end'' graphical element denotes the maximum morphism of that type. These two conditions clearly define the class of all groups when they are interpreted in $\cat{Rel}$, and they define the class of all discrete quantum groups when they are interpreted in $\cat{qRel}$ because they are equivalent to the two conditions in \Cref{thm:Vaes} by \cite[Theorem~B.8]{zbMATH07287276}.

The diagrammatic incarnation of the latter definition of a group, as a monoid that is equipped with an inversion map, is
\begin{equation}\label{eq:3conds}
  \begin{aligned}
    \begin{tikzpicture}
      \begin{pgfonlayer}{nodelayer}
        \node (cloud) at (0,1.6) {};
	\node (in) at (0,-1.3) {};
	\node (out) at (0,1.3) {};
	\node [style=box](r) at (0, -0.45) {$i^{\phantom{\dagger}}$};
	\node [style=box](s) at (0, 0.45) {$i^\dagger$};
      \end{pgfonlayer}
      \begin{pgfonlayer}{edgelayer}
	\draw[arrow] (r) to (in);
	\draw[arrow, markat=0.6] (r) to (s);
	\draw[arrow] (out) to (s);
      \end{pgfonlayer}
    \end{tikzpicture}
  \end{aligned}
  \quad
  \geq
  \quad
  \begin{aligned}
    \begin{tikzpicture}
      \begin{pgfonlayer}{nodelayer}
        \node (cloud) at (0,1.6) {};
	\node (in) at (0,-1.3) {};
	\node (out) at (0,1.3) {};
      \end{pgfonlayer}
      \begin{pgfonlayer}{edgelayer}
	\draw[arrow] (out) to (in);
      \end{pgfonlayer}
    \end{tikzpicture}
  \end{aligned},
  \qquad \qquad
  \begin{aligned}
    \begin{tikzpicture}
      \begin{pgfonlayer}{nodelayer}
        \node (cloud) at (0,1.6) {};
	\node (in) at (0,-1.3) {};
	\node (out) at (0,1.3) {};
	\node [style=box](r) at (0, -0.45) {$i^\dagger$};
	\node [style=box](s) at (0, 0.45) {$i^{\phantom{\dagger}}$};
      \end{pgfonlayer}
      \begin{pgfonlayer}{edgelayer}
	\draw[arrow] (in) to (r);
	\draw[arrow, markat=0.6] (s) to (r);
	\draw[arrow] (s) to (out);
      \end{pgfonlayer}
    \end{tikzpicture}
  \end{aligned}
  \quad
  \leq
  \quad
  \begin{aligned}
    \begin{tikzpicture}
      \begin{pgfonlayer}{nodelayer}
        \node (cloud) at (0,1.6) {};
	\node (in) at (0,-1.3) {};
	\node (out) at (0,1.3) {};
      \end{pgfonlayer}
      \begin{pgfonlayer}{edgelayer}
	\draw[arrow] (in) to (out);
      \end{pgfonlayer}
    \end{tikzpicture}
  \end{aligned},
  \qquad \qquad
  \begin{aligned}
    \begin{tikzpicture}
      \begin{pgfonlayer}{nodelayer}
	\node [style=box] (m) at (0,0) {\;$m$\;};
	\node (a) at (-0.2,0) {};
	\node (b) at (0.2,0) {};
	\node [style = box] (X) at (-0.2,-0.8) {$i$ };
	\node (Y) at (0.2,-1.3) { };
        \node (Z) at (-0.2,-1.3) { };
	\node (end) at (0,0.8) {};
      \end{pgfonlayer}
      \begin{pgfonlayer}{edgelayer}
	\draw [arrow] (X) to (a);
	\draw [arrow] (Y.center) to (b);
	\draw [arrow] (m) to (end);
        \draw [arrow, markat=0.85] (X) to (Z.center);
        \draw [out = 270, in = 270, looseness = 2] (Z.center) to (Y.center);
      \end{pgfonlayer}
    \end{tikzpicture}
  \end{aligned}
  \;=\;
  \begin{aligned}
    \begin{tikzpicture}
      \begin{pgfonlayer}{nodelayer}
        \node (end) at (0,1.2) {};
	\node [style = box] (e) at (0,0){$e$};
        \node (cloud) at (0,-1.1) {};
      \end{pgfonlayer}
      \begin{pgfonlayer}{edgelayer}
        \draw[arrow] (e) to (end);
      \end{pgfonlayer}
    \end{tikzpicture}
  \end{aligned}\,,
  \qquad \qquad
  \begin{aligned}
    \begin{tikzpicture}
      \begin{pgfonlayer}{nodelayer}
	\node [style=box] (m) at (0,0) {\;$m$\;};
	\node (a) at (-0.2,0) {};
	\node (b) at (0.2,0) {};
	\node (X) at (-0.2,-1.3) { };
        \node (Z) at (0.2,-1.3) { };
	\node [style = box] (Y) at (0.2,-0.8) {$i$};
	\node (end) at (0,0.8) {};
      \end{pgfonlayer}
      \begin{pgfonlayer}{edgelayer}
	\draw [arrow] (X.center) to (a);
	\draw [arrow] (Y) to (b);
	\draw [arrow] (m) to (end);
        \draw [arrow, markat=0.85] (Y) to (Z.center);
        \draw [out = 270, in = 270, looseness = 2] (Z.center) to (X.center);
      \end{pgfonlayer}
    \end{tikzpicture}
  \end{aligned}
  \;=\;
  \begin{aligned}
    \begin{tikzpicture}
      \begin{pgfonlayer}{nodelayer}
        \node (end) at (0,1.2) {};
	\node [style = box] (e) at (0,0){$e$};
        \node (cloud) at (0,-1.1) {};
      \end{pgfonlayer}
      \begin{pgfonlayer}{edgelayer}
        \draw[arrow] (e) to (end);
      \end{pgfonlayer}
    \end{tikzpicture}
  \end{aligned}\,.
\end{equation}
The first pair of conditions expresses that $i$ is a map, and the second pair of conditions expresses that it is an inversion for $m$. Interpreted in $\cat{Rel}$ the four conditions define the class of all groups, and they recall the definition of a group object \cite[\S III.6]{mcl_2e}. That they define the class of all Kac-type quantum groups when interpreted in $\cat{qRel}$ is precisely the content of \Cref{thm:us}, as we verify in \Cref{le:trnsl.scond.bis}. This characterization is the conjecture that motivated \Cref{thm:us}.

\subsection*{Acknowledgments}

This work is part of the project Graph Algebras partially supported by EU grant HORIZON-MSCA-SE-2021 Project 101086394. This work was supported by the National Science Foundation under Award No.\ DMS-2231414.


\section{Diagonal states, antipodes, and discrete Kac quantum groups}\label{se:main}

For hereditarily atomic von Neumann algebras $M$ the tensor products $M\stensor N$ are unambiguous; minimal coincide with maximal by \cite[Proposition 8.6]{zbMATH03248454}. The functor $M\stensor \bullet$ is furthermore product-preserving \cite[Proposition 8.4]{zbMATH03248454}, so given a decomposition of $M$ of the form \Cref{eq:prod.dec}, we have 
\begin{equation}\label{eq:mmop}
  M\stensor M^{op}
  \cong
  \left(\prod_i^{W^*} M_{n_i}\right)\stensor \left(\prod_i^{W^*} M^{op}_{n_i}\right)
  \cong
  \prod_{i,j\in I}M_{n_i}\otimes M_{n_j}^{op}.
\end{equation}

We need some further preparation and reminders of various notions related to \emph{quantum sets} \cite[\S 1.3]{zbMATH07828321}, which are sets $\cX=\left\{X_{\alpha}\right\}_{\alpha}$ of finite-dimensional Hilbert spaces $X_{\alpha}$, referred to as the \emph{atoms} \cite[Definition 2.1]{zbMATH07287276} of $\cX$. \emph{Relations} between quantum sets appear in three guises, and we will take the one-to-one-to-one correspondence between these three guises for granted throughout:

\begin{enumerate}[(a),wide]
\item\label{item:qrel.k} on the one hand \cite{zbMATH07287276}, (binary) relations from a quantum set $\cX$ to a quantum set $\cY$, i.e., choices of subspaces
  \begin{equation*}
    R(X,Y)\le L(X,Y)
    :=
    \left\{\text{bounded operators }X\to Y\right\}
    ,\quad
    X\in \At(\cX),\quad Y\in \At(\cY)
  \end{equation*}
  for every $X$ and $Y$ (finite-dimensional Hilbert spaces) ranging over the sets $\At(\bullet)$ of atoms of $\cX$ and $\cY$, respectively, 
   
\item\label{item:qrel.w.bimod} on the other hand \cite{zbMATH07388954}, quantum relations  from $\ell^\infty(\cX)$ to $\ell^\infty(\cY)$, i.e., weak$^*$-closed subspaces
  \begin{equation}\label{eq:vlxly}
    \cV
    \le
    L\left(H_{\cX},\ H_{\cY}\right)
    ,\quad
    H_{\bullet}:=\bigoplus_{H\in \At(\bullet)}H
  \end{equation}
  with $\ell^{\infty}(\cY)'\cdot \cV\cdot \ell^{\infty}(\cX)'=\cV$, 

\item\label{item:qrel.w.proj} finally, projections in $\ell^{\infty}(\cX)\stensor \ell^{\infty}(\cY)^{op}$.
\end{enumerate}
The correspondence \Cref{item:qrel.k} $\leftrightsquigarrow$ \Cref{item:qrel.w.bimod} is worked out in \cite[\S A.2]{zbMATH07828321} and can be summarized as
\begin{equation*}
  \cV
  \text{ as in \Cref{item:qrel.w.bimod}}
  \xmapsto{\quad}
  \bigg(
  R(X,Y):=\id_{Y}\cdot \cV\cdot \id_{X}
  \bigg)_{\substack{X\in \At(\cX)\\Y\in \At(\cY)}}.
\end{equation*}
On the other hand, \Cref{item:qrel.w.bimod} $\leftrightsquigarrow$ \Cref{item:qrel.w.proj} is essentially \cite[Proposition 2.23]{zbMATH06008057}, with appropriate routine modifications (that proposition handles \emph{self}-relations on a single \emph{finite} quantum set). It is more convenient to describe \Cref{item:qrel.k} $\leftrightsquigarrow$ \Cref{item:qrel.w.proj} instead:
\begin{align*}
  \left(R(X,Y)\right)_{X,Y}
  &\xmapsto{\quad}
    \text{projection $P$ with }\Ann_{A}\left(
    \cR:=\prod_{X,Y} R(X,Y)
    \right)
    =
    (1-P)A,\numberthis\label{eq:rel2proj.inf}\\
  \Ann_{A}(\cR)
  &:=
    \left\{a\in A\ :\ \cR a=\{0\}\right\},
\end{align*}
where
\begin{equation*}
    A
    :=
    \ell^{\infty}(\cX)\stensor \ell^{\infty}(\cY)^{op}
    \cong
    \prod_{\substack{X\in \At(\cX)\\Y\in \At(\cY)}}^{W^*}
    \End(X)\otimes \End(Y)^{op}
  \end{equation*}
right-acts on the full algebraic product
\begin{equation*}
  \prod_{\substack{X\in \At(\cX)\\Y\in \At(\cY)}}
  L(X,Y)
\end{equation*}
by composition
  \begin{equation*}
    x(a\otimes b)
    :=
    bxa
    ,\quad
    a\otimes b\in
    \End(X)\otimes \End(Y)^{op}
    ,\quad
    x\in \Hom(X,Y),
  \end{equation*}
with all other factors annihilating each other. In framework \Cref{item:qrel.w.bimod}, the composition of quantum relations $\cV$ and $\cW$ may be defined as the weak$^*$-closed span of the products $vw$ for $v \in \cV$ and 
$w \in \cW$, and the dagger adjoint of $\cV$ may be defined as the subspace of adjoints $v^*
$ for $v \in \cV$. See \cite[\S~3]{zbMATH07287276} for the corresponding definitions in framework \Cref{item:qrel.k}.

\begin{remark}\label{re:rel2proj.inf}
  That \Cref{eq:rel2proj.inf} implements a bijection between \Cref{item:qrel.k} and \Cref{item:qrel.w.proj} above for arbitrary quantum sets is not difficult to deduce from the aforementioned \cite[Proposition 2.23]{zbMATH06008057}.

  For a quantum set $\cX=\left\{X_{\alpha}\right\}_{\alpha}$ write $\cX_{F}$ for the quantum set comprising a subset $F\subseteq \At(\cX)$ of atoms. There are obvious relations $\cX\leftrightarrow \cX_{F}$ realizing $\cX$ as both the categorical product and the \emph{co}product \cite[Remark 3.7]{zbMATH07287276} of the atomic quantum sets $\cX_{\alpha}$, each consisting of a single atom $X_{\alpha}$. Now, for quantum sets $\cX$ and $\cY$ we have
  \begin{equation*}
    \begin{tikzpicture}[>=stealth,auto,baseline=(current  bounding  box.center)]
      \path[anchor=base] 
      (0,0) node (l) {$\cat{qRel}(\cX,\cY)$}
      +(-2,2) node (ul) {$\displaystyle\varprojlim_{\substack{\text{finite $F\subseteq \At(\cX)$}\\\text{finite $G\subseteq \At(\cY)$}}}\cat{qRel}(\cX_F,\cY_G)$}
      +(8,2) node (ur) {$\displaystyle\varprojlim_{F,G}
    \mathrm{Proj}\left(\ell^{\infty}(\cX_F)\stensor \ell^{\infty}(\cY_G)^{op}\right)$}
      +(6,0) node (r) {$\mathrm{Proj}\left(\ell^{\infty}(\cX)\stensor \ell^{\infty}(\cY)^{op}\right)$}
      ;
      \draw[->] (ul) to[bend left=6] node[pos=.5,auto,swap] {$\scriptstyle \cong$} node[pos=.5,auto] {$\scriptstyle \text{\cite[Prop. 2.23]{zbMATH06008057}+\cite[\S A.2]{zbMATH07828321}}$} (ur);
      \draw[->] (l) to[bend left=6] node[pos=.5,auto] {$\scriptstyle \cong$} (ul);
      \draw[->] (ur) to[bend left=6] node[pos=.5,auto] {$\scriptstyle \cong$} (r);
    \end{tikzpicture}
  \end{equation*}
  with the left-hand isomorphism being a consequence of $\cX$ being the (co)product of its atomic components.
\end{remark}

Some notation will help handle the passage between the various avatars of a quantum relation.  

\begin{notation}\label{not:rel.proj.fn}
  We denote
  \begin{enumerate}[(1), wide]
  \item the relation associated with a $W^*$-morphism by
    \begin{equation}\label{eq:mor2rel}
      \left(\ell^{\infty}(\cY)\xrightarrow{\quad \psi \quad}\ell^{\infty}(\cX)\right)
      \quad
      \xrsquigarrow{\qquad}
      \quad
      \left(\cX\xrightarrow{\quad \hat \psi\quad} \cY\right);
    \end{equation}

  \item the relation associated to a projection by 
    \begin{equation*}
      \mathrm{Proj}\left(
        \ell^{\infty}(\cX)\stensor \ell^{\infty}(\cY)^{op}
      \right)
      \ni
      p
      \quad
      \xrsquigarrow{\qquad}
      \quad
      \left( \cX\xrightarrow{\quad \hat p\quad} \cY \right);
    \end{equation*}

  \item the projection associated to a relation by
    \begin{equation*}
      \left(\cX\xrightarrow{\quad R\quad} \cY\right)
      \quad
      \xrsquigarrow{\qquad}
      \quad
      \left(
        P_R
        \in
        \ell^{\infty}(\cX)\stensor \ell^{\infty}(\cY)^{op}
      \right);
    \end{equation*}

  \item and finally (following \cite[\S 2.2]{zbMATH07828321}), by $\cY\xrightarrow{\top_{\cY}^{\cZ}}\cZ$ the largest relation between two quantum sets, with the monoidal unit $\mathbf{1}$ omitted as either a subscript or superscript.
  \end{enumerate}
\end{notation}

\begin{remark}\label{re:intertw}
  We remind the reader (e.g. \cite[Theorem]{1101.1694v3} or \cite[Theorem 6.3]{zbMATH07287276}) that the correspondence \Cref{eq:mor2rel} is given by
  \begin{equation*}
    \hat \psi
    =
    \cV:=\left\{T\in L\left(H_{\cX},H_{\cY}\right)\ :\ aT=T\psi(a),\ \forall a\in \ell^{\infty}(\cY)\right\}
  \end{equation*}
  in the notation of \Cref{eq:vlxly}. In other words, $\cV$ is the space of all bounded operators that \emph{intertwine} two actions of $\ell^{\infty}(\cY)$.
\end{remark}

The \Cref{item:qrel.k} + \Cref{item:qrel.w.bimod} + \Cref{item:qrel.w.proj} picture can be further amplified by identifying \cite[Theorem 7.4]{zbMATH07287276} $W^*$-morphisms $\ell^{\infty}(\cY)\to \ell^{\infty}(\cX)$ with \emph{functions} or synonymously \emph{maps} $\cX\to \cY$ \cite[Definition 4.5]{zbMATH07287276}, which are relations  $\cX\xrightarrow{R}\cY$ such that
\begin{equation*}
  \id_{\cY}\ge R\circ R^{\dag}
  \quad\text{and}\quad
  \id_{\cX}\le R^{\dag}\circ R. 
\end{equation*}

Recall also \cite[Theorem 3.6]{zbMATH07287276} that the monoidal product on the category $\cat{qRel}$ of quantum sets and relations makes the latter \emph{rigid} \cite[Definition 2.10.1]{egno} symmetric monoidal and therefore \emph{monoidal closed} in the sense of \cite[Definition 6.1.3]{brcx_hndbk-2}. This monoidal product is called the Cartesian product and is written $\cX \times \cY$ because it generalizes the Cartesian product of sets, but it is not the category-theoretic product. We denote by
\begin{equation*}
  \cZ^*\times \cZ
  \xrightarrow{\quad\ev_{\cZ}\quad}
  \mathbf{1}
  :=
  \text{monoidal unit of $\cat{qRel}$}
\end{equation*}
the evaluation morphisms underlying the rigid structure of $\cat{qRel}$. 

The homomorphism-function correspondence interacts well with the projection incarnation of relations between quantum sets:

\begin{proposition}\label{pr:proj.compat.mor}
  Given 
  \begin{equation*}
    \cX\xrightarrow[\quad\text{relation}\quad]{\quad R\quad} \cY
    \quad\text{and}\quad
    \begin{aligned}
      \ell^{\infty}(\cY)
      &\xrightarrow[\quad\text{$W^*$-morphism}\quad]{\quad\psi_{r}\quad}
        \ell^{\infty}(\cZ)\\
      \ell^{\infty}(\cX)
      &\xrightarrow[\quad\text{$W^*$-morphism}\quad]{\quad\psi_{\ell}\quad}
        \ell^{\infty}(\cW)\\
    \end{aligned}
  \end{equation*}
  we have
  \begin{equation}\label{eq:mor.compat.proj}
    \begin{tikzpicture}[>=stealth,auto,baseline=(current  bounding  box.center)]
      \path[anchor=base] 
      (0,0) node (lu) {$\ell^{\infty}(\cX)\stensor \ell^{\infty}(\cY)^{op}$}
      +(5,0) node (ru) {$\ell^{\infty}(\cW)\stensor \ell^{\infty}(\cZ)^{op}$}
      +(0,-1.5) node (ld) {$P_R$}
      +(5,-1.5) node (rd) {$P_{\hat \psi_{r}^{\dag}\circ R\circ \hat \psi_{\ell}}$}    
      +(0,-.7) node () {$\text{\rotatebox[origin=c]{90}{$\in$}}$}
      +(5,-.7) node () {$\text{\rotatebox[origin=c]{90}{$\in$}}$}
      ;
      \draw[->] (lu) to[bend left=6] node[pos=.5,auto] {$\scriptstyle \psi_{\ell}\stensor \psi_{r}$} (ru);
      \draw[|->] (ld) to[bend right=6] node[pos=.5,auto] {$\scriptstyle $} (rd);
    \end{tikzpicture}
  \end{equation}
  in the language of \Cref{not:rel.proj.fn}.
\end{proposition}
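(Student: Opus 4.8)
The plan is to check \eqref{eq:mor.compat.proj} one matrix block at a time. Write $\ell^\infty(\cX)\stensor\ell^\infty(\cY)^{op}\cong\prod_{X,Y}\End(X)\otimes\End(Y)^{op}$ and $\ell^\infty(\cW)\stensor\ell^\infty(\cZ)^{op}\cong\prod_{W,Z}\End(W)\otimes\End(Z)^{op}$, the products over atoms. Since $\stensor$ preserves products, the $(W,Z)$-component of $(\psi_\ell\stensor\psi_r)(P_R)$ is $(\psi_\ell|_W\stensor\psi_r|_Z)(P_R)$, where $\psi_\ell|_W$ and $\psi_r|_Z$ are $\psi_\ell$, $\psi_r$ followed by the coordinate projections onto $\End(W)$, $\End(Z)$. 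By normality each $\psi_\ell|_W$ factors through $\prod_{X\in F}\End(X)$ for a finite $F\subseteq\At(\cX)$, and the structure of unital $*$-homomorphisms into a matrix algebra gives a unitary $W\cong\bigoplus_{X\in F}X\otimes K_X$, with finite-dimensional multiplicity spaces $K_X$, making $\psi_\ell|_W(a)=\bigoplus_{X\in F}a|_X\otimes\id_{K_X}$; symmetrically $Z\cong\bigoplus_{Y\in G}Y\otimes L_Y$ with $\psi_r|_Z(b)=\bigoplus_{Y\in G}b|_Y\otimes\id_{L_Y}$ for a finite $G\subseteq\At(\cY)$.

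I would then record the block form of the relation-to-projection map \eqref{eq:rel2proj.inf}. Identifying $\End(X)\otimes\End(Y)^{op}$ with $\End\big(L(X,Y)\big)$ through the right action $x\cdot(a\otimes b^{op})=bxa$, a computation of the right ideal $\Ann\big(R(X,Y)\big)$ shows that the $(X,Y)$-component $P(X,Y)$ of $P_R$ is the Hilbert--Schmidt orthogonal projection of $L(X,Y)$ onto $R(X,Y)$; equivalently $P_R$ acts on $\prod_{X,Y}L(X,Y)$ as the componentwise Hilbert--Schmidt orthogonal projection onto $\cR=\prod_{X,Y}R(X,Y)$. Applying this to $P_{\hat\psi_r^\dagger\circ R\circ\hat\psi_\ell}$ as well, its $(W,Z)$-component is the Hilbert--Schmidt orthogonal projection of $L(W,Z)$ onto the subspace $\big(\hat\psi_r^\dagger\circ R\circ\hat\psi_\ell\big)(W,Z)$.

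To identify that subspace I would use the intertwiner description of \Cref{re:intertw}: for $X\in F$, the block $\hat\psi_\ell(W,X)\le L(W,X)$ consists of the composites $W\to X\otimes K_X\xrightarrow{\id_X\otimes\lambda}X$ with $\lambda\in K_X^*$, the first arrow being the projection onto the $X$-summand, while $\hat\psi_\ell(W,X)=0$ for $X\notin F$; dually, for $Y\in G$, the block $\hat\psi_r^\dagger(Y,Z)\le L(Y,Z)$ consists of the maps $Y\xrightarrow{\eta\mapsto\eta\otimes v}Y\otimes L_Y\hookrightarrow Z$ with $v\in L_Y$. Composing, the ``core'' $X\otimes K_X\to Y\otimes L_Y$ of $t\circ r\circ s$, for $s\in\hat\psi_\ell(W,X)$, $r\in R(X,Y)$, $t\in\hat\psi_r^\dagger(Y,Z)$, is $r\otimes\theta$ with $\theta$ a rank-one operator $K_X\to L_Y$; as such $\theta$ span $L(K_X,L_Y)$, this yields
\begin{align*}
  \big(\hat\psi_r^\dagger\circ R\circ\hat\psi_\ell\big)(W,Z)
  &=\bigoplus_{X\in F,\,Y\in G}R(X,Y)\otimes L(K_X,L_Y)\\
  &\subseteq\bigoplus_{X\in F,\,Y\in G}L(X,Y)\otimes L(K_X,L_Y)=L(W,Z),
\end{align*}
the exhibited decomposition of $L(W,Z)$ into blocks indexed by source and target summands being Hilbert--Schmidt orthogonal. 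Hence the projection of the previous paragraph equals $\bigoplus_{X\in F,\,Y\in G}\pi_{R(X,Y)}\otimes\id_{L(K_X,L_Y)}$, where $\pi_{R(X,Y)}$ denotes the orthogonal projection of $L(X,Y)$ onto $R(X,Y)$.

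It remains to match this against $(\psi_\ell|_W\stensor\psi_r|_Z)(P_R)$. Writing $P_R=\prod_{X,Y}P(X,Y)$, the morphism $\psi_\ell|_W\stensor\psi_r|_Z$ kills the components with $X\notin F$ or $Y\notin G$ and carries the component $P(X,Y)$ into the block $\End(X\otimes K_X)\otimes\End(Y\otimes L_Y)^{op}$ of $\End(W)\otimes\End(Z)^{op}$ as $P(X,Y)\otimes\id_{K_X}\otimes\id_{L_Y}$, which acts on $L(X,Y)\otimes L(K_X,L_Y)\subseteq L(W,Z)$ as $\pi_{R(X,Y)}\otimes\id$; summing over $X\in F$, $Y\in G$ reproduces exactly the projection computed above. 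So the two sides of \eqref{eq:mor.compat.proj} agree block by block, hence coincide. I expect the real work to lie in the two middle steps: establishing the block form of \eqref{eq:rel2proj.inf} (the identification $\End(X)\otimes\End(Y)^{op}\cong\End(L(X,Y))$ and the annihilator computation, with care about adjoints so that the resulting idempotent is the \emph{orthogonal} projection), and reading $\hat\psi_\ell$ and $\hat\psi_r^\dagger$ off \Cref{re:intertw} correctly. Once those are in place the block-by-block comparison is immediate.
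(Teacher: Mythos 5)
Your proof is correct, and it takes a genuinely different route from the paper's. The paper argues by reduction: it treats $\psi_\ell$ and $\psi_r$ separately (the two cases being exchanged by the dagger), uses the rigidity of $\cat{qRel}$ to collapse to the case $\cY=\mathbf{1}$, and thereby boils the statement down to \Cref{le:proj.mor} --- that the common support projection of the subspaces $T\im p$, for $T$ ranging over the intertwiner space $\cV$ of a $W^*$-morphism $\psi$, is exactly $\psi(p)$ --- which is in turn settled by citing the fact that the weak$^*$-closed span of $\cV\cV^*$ contains $1$ \cite[Proposition 1.4]{1101.1694v3}. You instead verify the identity one $(W,Z)$-block at a time, using the multiplicity-space decomposition $W\cong\bigoplus_{X\in F}X\otimes K_X$ of a normal unital $*$-homomorphism into a matrix algebra, together with the explicit identification of \Cref{eq:rel2proj.inf} as the componentwise Hilbert--Schmidt orthogonal projection onto $R(X,Y)$. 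The steps you flag as carrying the real weight do check out: the right action identifies $\End(X)\otimes\End(Y)^{op}$ anti-isomorphically (hence $*$-compatibly) with $\End\bigl(L(X,Y)\bigr)$ for the Hilbert--Schmidt inner product, so the largest projection annihilating $R(X,Y)$ is the one onto $R(X,Y)^{\perp}$; the blocks of $\hat\psi_\ell$ and $\hat\psi_r^{\dag}$ are as you read them off \Cref{re:intertw} by Schur's lemma; and the block decomposition of $L(W,Z)$ indexed by source and target summands is Hilbert--Schmidt orthogonal, so the two sides agree block by block. What your approach buys is self-containedness and concreteness --- no appeal to the external $\cV\cV^*$ result, everything reduced to finite-dimensional linear algebra --- at the cost of more bookkeeping; the paper's reduction is shorter and makes visible the structural reason for the statement, namely its compatibility with the dagger and the rigid monoidal structure.
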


\Cref{pr:proj.compat.mor} follows from basic results about quantum sets, namely \cite[Theorems 3.6, 7.4, and B.8]{zbMATH07287276}. Explicitly, we have natural bijections
\begin{equation*}
\cat{qRel}(\cX, \cY)
\iso
\cat{qRel}(\cX \times \cY^*, \mathbf 1)
\iso
 \mathrm{Proj}(\ell^\infty(\cX \times \cY^*))
 \iso
 \mathrm{Proj}(\ell^\infty(\cX) \stensor \ell^\infty(\cY)^{op}),
\end{equation*}
where the first natural bijection comes from \cite[Theorem~3.6]{zbMATH07287276}, the second from \cite[Theorem~B.8]{zbMATH07287276}, and the third from \cite[Theorem~7.4]{zbMATH07287276}. We provide an alternative proof of \Cref{pr:proj.compat.mor} that is closer to the basic notions.

\pf{pr:proj.compat.mor}
\begin{pr:proj.compat.mor}
  A number of simplifications will boil down the claim to its core. First, it will suffice to handle each of $\psi_{\ell}$ and $\psi_r$ separately, assuming that the other is the identity. Second, the two resulting branches are entirely analogous (and in fact interchangeable by applying the dagger functor of \cite[Definition 3.5]{zbMATH07287276}). It thus suffices to work with three quantum sets $\cW$, $\cX$, and $\cY=\cZ$ and the single morphism $\psi:=\psi_{\ell}$. Finally, the natural bijections
  \begin{equation*}
    \cat{qRel}\left(\bullet,\cY\right)
    \quad\cong\quad
    \cat{qRel}\left(\bullet\times \cY^*,\mathbf{1}\right)
  \end{equation*}
  witnessing rigidity further reduce the problem to the case $\cY=\textbf{1}$, after relabeling
  \begin{equation*}
    \cX\times \cY^*\xrsquigarrow{\quad} \cX
    \quad\text{and}\quad
    \cW\times \cY^*\xrsquigarrow{\quad} \cW.
  \end{equation*}
  In light of \Cref{re:intertw}, the claim now amounts to \Cref{le:proj.mor} below.
\end{pr:proj.compat.mor}

\begin{lemma}\label{le:proj.mor}
  Let $N\le L(K)$ and $M\le L(H)$ be two von Neumann algebras, $N\xrightarrow{\psi}M$ a $W^*$-morphism, and $p\in N$ a projection. Also let
  \begin{equation*}
    \cV:=\left\{T\in L(K,H)\ :\ Tn=\psi(n)T,\ \forall n\in N\right\}. 
  \end{equation*}
  The common support projection of the subspaces $T\im p \leq H$, for $T\in \cV$, is precisely $\psi(p)$. 
\end{lemma}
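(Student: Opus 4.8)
The plan is to establish the two inclusions between the common support projection $q$ of the subspaces $T\im p$ (for $T\in\cV$) and $\psi(p)$. The easy inclusion $q\le\psi(p)$ comes directly from the intertwining relation: for $T\in\cV$ and a unit vector $\xi\in\im p$, one has $T\xi=T p\xi=\psi(p)T\xi$, so $T\im p\subseteq\im\psi(p)$. Hence every $T\im p$ is dominated by $\psi(p)$, and therefore so is their join $q$.

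For the reverse inclusion $\psi(p)\le q$, I would exploit the structure of $\cV$ as a bimodule. First note that $\cV$ is a left $M$-module (if $T\in\cV$ and $m\in M$ then $mT\in\cV$, since $mTn=m\psi(n)T=\psi(n)mT$), so $\im q$ — the closed span of the ranges $T\im p$ — is an $M$-submodule of $H$, i.e. $q$ is a projection in $M'$. Thus it suffices to produce, for each unit vector $\eta\in\im\psi(p)$, some $T\in\cV$ with $\eta\in\overline{M\cdot T\im p}$; equivalently, it suffices to show $q\ne 0$ whenever $\psi(p)\ne0$ after cutting down, and then use the $M'$-module structure to sweep out all of $\im\psi(p)$. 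Concretely: if $\psi(p)\wedge(1-q)\ne0$, pick a nonzero partial isometry-free approach — better, observe that $q\le\psi(p)$ and $q\in M'$ force $q=\psi(p)r$ for a central-type projection; I would instead argue that the $W^*$-morphism $\psi$ is \emph{faithful on the corner} in the relevant sense, using that $\cV$ is \emph{full} in the sense that $\cV^*\cV$ and $\cV\cV^*$ generate $N$ and $M$ as von Neumann algebras respectively (this is the content of \Cref{re:intertw} identifying $\cV$ with the relation $\hat\psi$, whose composites with their daggers recover the identities because $\psi$ is a unital normal $*$-homomorphism). Fullness gives: $\overline{\spn}\,\cV\im p=\overline{\spn}\,\cV p K$, and since $\overline{\spn}\,\cV^*\cV=N$ weak$^*$-ly while $\overline{\spn}\,\cV\cV^*=M$, the projection onto $\overline{\spn}\,\cV pK$ is exactly $\psi(p)$.

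In more detail for the last step: let $q$ be the support of $\{T\im p: T\in\cV\}$. We have $q\le\psi(p)$ from the first paragraph. Now $(1-\psi(p))\cV p=0$ means nothing new, but consider $T\in\cV$ and compute $T^*\,m\,T'\in L(K)$ for $T,T'\in\cV$, $m\in M$: this lies in $N$ (it intertwines $N$ with itself), and as $m$ ranges over $M$ and $T,T'$ over $\cV$, these elements weak$^*$-generate $N$ by fullness. Cutting by $p$ on both sides, $p\,T^*mT'\,p$ weak$^*$-generate $pNp$; but $pT^*mT'p=(Tp)^*m(T'p)$, which vanishes if $Tp=0$ for all $T$, i.e. if $\psi(p)q=0$ would force — tracking through — $p\,N\,p=0$, hence $p=0$. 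To get the full statement rather than just non-vanishing, apply this after replacing $p$ by any subprojection $p'\le p$ with $\psi(p')=\psi(p)\wedge r$ for $r\in M'$ arbitrary: since $\psi$ is a $*$-homomorphism its image meets every corner of $M'$ appropriately, and one concludes $q=\psi(p)$.

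\medskip

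\emph{Main obstacle.} The crux is the reverse inclusion, specifically making precise and using the \textbf{fullness} of the bimodule $\cV$ — that $\cV$ weak$^*$-densely spans enough of $L(K,H)$ that $\overline{\spn}\,\cV\,pK$ has support exactly $\psi(p)$ rather than something strictly smaller. This is essentially the statement that the relation $\hat\psi$ attached to the $W^*$-morphism $\psi$ is a genuine function (satisfies $\hat\psi^\dagger\circ\hat\psi\ge\id$ and $\hat\psi\circ\hat\psi^\dagger\le\id$), reinterpreted at the level of support projections; I expect to lean on \Cref{re:intertw} together with the standard fact that for a unital normal $*$-homomorphism the intertwiner space is nondegenerate as a bimodule. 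Once fullness is in hand, both inclusions are short.
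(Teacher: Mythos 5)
Your first inclusion is fine, and you have correctly identified the key ingredient for the converse: the fullness statement that the weak$^*$-closed span of $\cV\cV^*$ contains $1$, which is exactly what the paper imports from \cite[Proposition 1.4]{1101.1694v3}. But the detailed execution of the reverse inclusion contains genuine errors. First, $\cV$ is \emph{not} a left $M$-module: your computation $mTn = m\psi(n)T = \psi(n)mT$ requires $m$ to commute with $\psi(N)$, so $\cV$ is only a left $\psi(N)'$-module (and a right $N'$-module); consequently $q$ lies in $\psi(N)''$, not in $M'$. Second, $T^*mT'$ for general $m\in M$ does not lie in $N$ (nor in $N'$): one has $nT^* = T^*\psi(n)$, so $nT^*mT' = T^*\psi(n)mT'$, which equals $T^*m\psi(n)T' = T^*mT'n$ only when $m\in\psi(N)'$. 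Third, the final step --- replacing $p$ by subprojections $p'\le p$ with $\psi(p')=\psi(p)\wedge r$ for arbitrary $r\in M'$ --- is unjustified: such $p'$ need not exist, since $\psi(p)\wedge r$ need not lie in the image of $\psi$ at all. As written, your argument would at best yield $q\ne 0$ whenever $p\ne 0$ (and even that rests on the faulty generation claims), not $q=\psi(p)$.

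The gap closes easily once fullness is applied correctly. The paper reduces to the case $p=1$ by passing to the corner morphism $pNp\to\psi(p)M\psi(p)$, where $1\in\overline{\spn}\,\cV\cV^*$ immediately gives $\overline{\cV K}=H$ (if $\zeta\perp\cV K$ then $T^*\zeta=0$ for all $T\in\cV$, hence $\zeta=1\cdot\zeta=0$). Alternatively, and closer to the spirit of your attempt: the intertwining relation gives $\cV p=\psi(p)\cV$, so $\cV pK=\psi(p)\,\cV K$; since $\cV K$ is dense in $H$ by fullness and $\psi(p)$ maps a dense subset of $H$ onto a dense subset of $\psi(p)H$, the closed span of $\cV pK$ is exactly $\psi(p)H$. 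Either route replaces your last two paragraphs.
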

\begin{proof}
  That support projection is plainly dominated by $\psi(p)$, so the claim is that it cannot be smaller. Passing to the morphism $pNp\to \psi(p)M\psi(p)$ induced by $\psi$, there is no loss in assuming $p=1$. The conclusion now follows from \cite[Proposition 1.4]{1101.1694v3}, which (translated into our notation) shows that $\cV\cV^*$ (weak$^*$-closed span of products) contains $1$.
\end{proof}

We can now begin to unwind some of the formalism underlying \Cref{thm:us,th:is.kac.bis}. 

\begin{lemma}\label{le:trnsl.scond}
  Given $(M=\ell^{\infty}(\cX),\Delta,\varepsilon)$ as in \Cref{thm:us}, the conditions \Cref{item:thm:us:l} and \Cref{item:thm:us:r} imposed on $s$ are respectively equivalent to
  \begin{enumerate}[(a),wide]
  \item\label{item:le:trnsl.scond:l} $\hat \varepsilon^{\dag} \circ \hat \Delta \circ (\hat s\times\id) \ge \ev_{\cX}$,
  \item\label{item:le:trnsl.scond:r} $ \hat \varepsilon^{\dag} \circ \hat \Delta \circ (\id\times \hat s) \ge \ev_{\cX^*}$.
  \end{enumerate}
\end{lemma}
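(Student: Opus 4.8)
The plan is to translate each of the scalar conditions \Cref{item:thm:us:l} and \Cref{item:thm:us:r} from the von Neumann algebraic side to the categorical/relational side of $\cat{qRel}$, using the dictionary set up in \Cref{re:rel2proj.inf}, \Cref{not:rel.proj.fn}, \Cref{re:intertw}, and \Cref{pr:proj.compat.mor}. The two conditions are mirror images of each other (related by the dagger functor together with the $op$-flip), so I would treat \Cref{item:le:trnsl.scond:l} in detail and obtain \Cref{item:le:trnsl.scond:r} by the same argument applied to $\cX^*$ in place of $\cX$, remarking only on what changes. Throughout, write $M = \ell^\infty(\cX)$ and recall that $M^{op} = \ell^\infty(\cX^*)$.

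First I would interpret the left-hand side $\varphi \circ (s \stensor \id) \circ \Delta$. The composite $(s \stensor \id)\circ \Delta$ is a $W^*$-morphism $M \to M^{op} \stensor M = \ell^\infty(\cX^* \times \cX)$, so under \Cref{eq:mor2rel} it corresponds to a function $\cX^* \times \cX \to \cX$, which by \Cref{pr:proj.compat.mor} (with $\psi_\ell = s$, $\psi_r = \id$ applied to $\hat\Delta$) is exactly $\hat\Delta \circ (\hat s \times \id)$. Next, the key point is to recognize what it means for the scalar equation to hold \emph{for all diagonal states} $\varphi$ on $M^{op}\stensor M = \ell^\infty(\cX^*\times\cX)$: by the characterization of the diagonal projection (a normal state is diagonal iff it is supported on $\delta_{M}$, equivalently on the projection corresponding to the evaluation relation $\ev_\cX\: \cX^*\times\cX \to \mathbf 1$), requiring $\varphi \circ (\text{-}) = \varepsilon$ for all such $\varphi$ is equivalent to a comparison of two relations $\cX^*\times\cX \to \mathbf 1$, one being the composite of $\hat\Delta\circ(\hat s\times\id)$ with $\hat\varepsilon^\dagger$-type data, the other being $\ev_\cX$ itself. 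The scalar equality "for all diagonal states" then unpacks into the domination statement $\hat\varepsilon^\dagger \circ \hat\Delta \circ (\hat s \times \id) \ge \ev_\cX$, since a relation into $\mathbf 1$ (a projection in $\ell^\infty(\cX^*\times\cX)$) dominating $\ev_\cX$ is precisely the condition that every state supported on the $\ev_\cX$-projection pushes forward correctly.

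The one genuinely delicate step is matching $\varepsilon$ on the right-hand side of the scalar equation with $\hat\varepsilon^\dagger$ on the relational side, and confirming that the passage "equality of the scalar for all diagonal $\varphi$" is exactly "$\ge$" and not "$=$" of relations. Here I would use that $\varepsilon$, as a $W^*$-morphism $M \to \bC = \ell^\infty(\mathbf 1)$, corresponds under \Cref{eq:mor2rel} to a function $\hat\varepsilon\: \mathbf 1 \to \cX$, whose adjoint $\hat\varepsilon^\dagger\: \cX \to \mathbf 1$ is the relation selecting the support projection $e$ of $\varepsilon$; composing with $\hat\Delta\circ(\hat s\times\id)\: \cX^*\times\cX\to\cX$ gives the left-hand relation. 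The inequality (rather than equality) direction is forced because $\varphi(P) = 1$ whenever $P \ge Q$ and $\varphi$ is supported on $Q$; conversely, if the relation did \emph{not} dominate $\ev_\cX$ there would be a minimal projection under $\ev_\cX$ on which the composite vanishes, yielding a diagonal state violating the equation. I expect this "$=$ for all diagonal states $\iff$ $\ge$ of projections" equivalence — essentially an instance of the support-projection/state-duality already implicit in the discussion of diagonal states in the introduction — to be the crux, and I would isolate it as the main computation, with everything else being bookkeeping via \Cref{pr:proj.compat.mor} and the rigidity bijections $\cat{qRel}(\cX^*\times\cX,\mathbf 1)\cong\mathrm{Proj}(\ell^\infty(\cX^*\times\cX))$.

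For part \Cref{item:le:trnsl.scond:r}, the identical argument applies with the roles of the two tensor factors swapped: $(\id\stensor s)\circ\Delta\: M\to M\stensor M^{op} = \ell^\infty(\cX\times\cX^*)$ corresponds to $\hat\Delta\circ(\id\times\hat s)$, diagonal states on $M\stensor M^{op}$ are those supported on the projection of $\ev_{\cX^*}\: \cX\times\cX^*\to\mathbf 1$ (note $\cX^{**}\cong\cX$, so $\ev_{\cX^*}$ has the right type), and the scalar condition \Cref{item:thm:us:r} unpacks to $\hat\varepsilon^\dagger\circ\hat\Delta\circ(\id\times\hat s)\ge\ev_{\cX^*}$. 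I would simply note that this is \Cref{item:le:trnsl.scond:l} read through the symmetry $\cX\leftrightarrow\cX^*$ (together with the dagger), so no new work is required.
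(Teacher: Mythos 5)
Your proposal is correct and follows essentially the same route as the paper: translate $(s\stensor\id)\circ\Delta$ to $\hat\Delta\circ(\hat s\times\id)$ via \Cref{pr:proj.compat.mor}, identify $\delta_{M^{op}}$ with $P_{\ev_{\cX}}$ and $e$ with $P_{\hat\varepsilon^\dag}$, and convert ``$\varphi\circ(\cdot)=\varepsilon$ for all diagonal $\varphi$'' into the domination $((s\stensor\id)\circ\Delta)(e)\ge\delta_{M^{op}}$, exactly the chain \Cref{eq:long.eq}. The only quibble is your phrase ``a minimal projection under $\ev_{\cX}$ on which the composite vanishes'': failure of domination only gives a minimal $q\le\delta$ with $qPq\ne q$ (so the associated diagonal state satisfies $\varphi(P)<1$, not $\varphi(P)=0$), which still suffices.
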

\begin{proof}
  We handle the left-handed version concerning \Cref{item:le:trnsl.scond:l}, the other claim being entirely parallel.

  Denote by $e\in M$ the support projection of $\varepsilon$. We have
  \begin{equation}\label{eq:long.eq}
    \begin{aligned}
      \text{condition \Cref{item:thm:us:l} of \Cref{thm:us}}
      &\xLeftrightarrow{\quad}
        \left(\varphi\circ (s\stensor\id)\circ\Delta\right)(1-e)=0
        ,\quad\forall\text{ diagonal state }\varphi\\
      &\xLeftrightarrow{\quad}
        \varphi\left(\left((s\stensor\id)\circ\Delta\right)(1-e)\right)=0
        ,\quad\forall\text{ diagonal state }\varphi\\
      &\xLeftrightarrow{\quad}
        \left((s\stensor\id)\circ\Delta\right)(1-e) \perp \delta_{M^{op}}\\
      &\xLeftrightarrow{\quad}
        \left((s\stensor\id)\circ\Delta\right)(e) \ge \delta_{M^{op}}.\\
    \end{aligned}
  \end{equation}
  
  The right-hand side's $\delta_{M^{op}}$ is nothing but $P_{\ev_{\cX}}$, while
  \begin{equation*}
    e=P_{\hat \varepsilon^{\dag}}
    \xRightarrow{\quad\text{\Cref{eq:mor.compat.proj}}\quad}
    \left((s\stensor\id)\circ\Delta\right)(e) = P_{\hat \varepsilon^{\dag} \circ \hat \Delta \circ (\hat s\times\id)}.
  \end{equation*}
  The latter dominating $\delta_{M^{op}}$ means precisely \Cref{item:le:trnsl.scond:l} because $\bullet\mapsto P_{\bullet}$ is order-preserving, finishing the proof. 
\end{proof}

\begin{remark}\label{re:cj}
  The inequalities \Cref{item:le:trnsl.scond:l} and \Cref{item:le:trnsl.scond:r} of \Cref{le:trnsl.scond} are what the two bottom constraints in \cite[Conjecture, p.17]{kor_qgr_sld}, formulated there in graphical language, unpack to. In other words, \Cref{le:trnsl.scond} bridges the gap between that conjectural statement and that of \Cref{thm:us}, which thus resolves the conjecture. 
\end{remark}

The following variant of \Cref{le:trnsl.scond} will also confirm that, as we remarked in the Introduction, the constraints imposed in \Cref{thm:us} amount to precisely the interesting portion of \Cref{eq:3conds}.

\begin{lemma}\label{le:trnsl.scond.bis}
  Given $(M=\ell^{\infty}(\cX),\Delta,\varepsilon)$ as in \Cref{thm:us}, the conditions \Cref{item:thm:us:l} and \Cref{item:thm:us:r} imposed on $s$ are respectively equivalent to
  \begin{enumerate}[(a),wide]
  \item\label{item:le:trnsl.scond.bis:l} $\hat \Delta \circ (\hat s\times\id)\circ \ev^{\dag}_{\cX}
    =
    \hat \varepsilon$,
  \item\label{item:le:trnsl.scond.bis:r} $\hat \Delta \circ (\id\times \hat s)\circ \ev^{\dag}_{\cX^*}
    =
    \hat \varepsilon$,
  \end{enumerate}
  i.e., the third and fourth diagrams in \Cref{eq:3conds}. 
\end{lemma}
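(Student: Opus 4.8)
The plan is to reduce \Cref{le:trnsl.scond.bis} to \Cref{le:trnsl.scond}, which has already been proved, by a purely formal manipulation in the rigid symmetric monoidal dagger category $\cat{qRel}$. The two lemmas concern the same pair of conditions on $s$, so it suffices to show that, for instance, inequality \Cref{item:le:trnsl.scond:l},
\begin{equation*}
  \hat\varepsilon^\dag\circ\hat\Delta\circ(\hat s\times\id)\ \ge\ \ev_{\cX},
\end{equation*}
is equivalent to the equality \Cref{item:le:trnsl.scond.bis:l},
\begin{equation*}
  \hat\Delta\circ(\hat s\times\id)\circ\ev^\dag_{\cX}\ =\ \hat\varepsilon.
\end{equation*}
First I would transpose the inequality across the rigid adjunction: since $\ev_{\cX}\colon\cX^*\times\cX\to\mathbf 1$ is the counit of the dual pairing, the relation $\ev_{\cX}^\dag$ is the "name" of the identity, and bending the leftmost wire turns the morphism $\hat\varepsilon^\dag\circ\hat\Delta\circ(\hat s\times\id)\colon\cX^*\times\cX\to\mathbf 1$ into a morphism $\cX\to\cX$ (or, after composing with $\ev_{\cX}^\dag$ on the other side, a morphism $\mathbf 1\to\cX\times\cX^*$, etc.). Because transposition is an order isomorphism on hom-posets (the rigid structure in $\cat{qRel}$ is compatible with the quantaloid enrichment — this is part of \cite[Theorem 3.6]{zbMATH07287276} and is exactly the "dagger compact quantaloid" structure invoked in the Introduction), the inequality \Cref{item:le:trnsl.scond:l} is equivalent to
\begin{equation*}
  \hat\Delta\circ(\hat s\times\id)\circ\ev^\dag_{\cX}\ \ge\ \hat\varepsilon
\end{equation*}
as relations $\mathbf 1\to\cX\times\cX$ (I would keep careful track of whether the transposed target is $\cX\times\cX$, which is what $\hat\varepsilon\colon\mathbf 1\to\cX$ sits inside after accounting for $\Delta\colon M\to M\stensor M$ making $\hat\Delta\colon\cX\times\cX\to\cX$).

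The remaining point is to upgrade this $\ge$ to an $=$. Here I would use that $\hat\varepsilon$ is a \emph{function}: $\varepsilon\colon M\to\bC$ is a $W^*$-morphism, so $\hat\varepsilon\colon\mathbf 1\to\cX$ is a map in the sense recalled before \Cref{pr:proj.compat.mor}, hence in particular a \emph{total} and single-valued relation out of $\mathbf 1$, i.e. a minimal nonzero such relation in the appropriate sense — concretely, the projection $P_{\hat\varepsilon}=e$ is the support projection of $\varepsilon$, an atom-like minimal object among the projections arising this way. A relation $R\ge\hat\varepsilon$ with $R$ of the same type need not equal $\hat\varepsilon$ in general, so I cannot conclude purely from maximality of $\hat\varepsilon$; instead I would argue that the left-hand side $\hat\Delta\circ(\hat s\times\id)\circ\ev^\dag_{\cX}$ is \emph{itself} a function, being a composite of the function $\hat s\times\id$, the function $\hat\Delta$ (as $\Delta$ is a $W^*$-morphism), and $\ev_{\cX}^\dag=\hat{\mathrm{co\text-}\varepsilon}$-type morphism $\mathbf 1\to\cX\times\cX^*$ which is a map because $\cat{qRel}$ is rigid and $\ev_{\cX}^\dag$ is the unit of a dual pair. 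A composite of maps is a map, and two maps one of which dominates the other must be equal (if $R\le R'$ with both maps, then $R'=R'\circ R^\dag\circ R\le \id\circ R = R$ using $\id\ge R'\circ R^\dag$ together with $R^\dag\circ R\ge\id$... more precisely $R' = R'\circ\id\le R'\circ R^\dag\circ R\le\id\circ R=R$). Hence $\ge$ forces $=$, giving \Cref{item:le:trnsl.scond.bis:l}; the converse implication $=\Rightarrow\ge$ is trivial.

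The main obstacle I anticipate is not conceptual but bookkeeping: getting the transposition bijections, the identifications $\ev_{\cX}^\dag\leftrightsquigarrow$ the relevant "cup", and the domains/codomains ($\cX$ versus $\cX^*$, $\cX\times\cX$ versus $\cX^*\times\cX$) all consistent, and in particular checking that the version of $\ev_{\cX}^\dag$ appearing is genuinely a map — rigidity gives that $\ev_{\cX}$ and its "coevaluation" partner satisfy the zig-zag identities, from which one reads off that $\ev_{\cX}^\dag$ is total and single-valued. One must also confirm the left-handed/right-handed pairing of \Cref{item:le:trnsl.scond.bis:l} with \Cref{item:thm:us:l} matches (so $\ev^\dag_{\cX}$ and not $\ev^\dag_{\cX^*}$ appears), which is just a matter of tracing through which tensor factor $s$ acts on; this mirrors exactly the corresponding bookkeeping already carried out in the proof of \Cref{le:trnsl.scond}, and as there the right-handed statement \Cref{item:le:trnsl.scond.bis:r} follows by applying the dagger functor of \cite[Definition 3.5]{zbMATH07287276}. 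I would write the proof as: "We treat \Cref{item:le:trnsl.scond.bis:l}, the other case being symmetric. By \Cref{le:trnsl.scond} it suffices to show \Cref{item:le:trnsl.scond:l} is equivalent to \Cref{item:le:trnsl.scond.bis:l}," then run the transposition-plus-maps argument above.
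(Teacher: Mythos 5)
Your reduction to \Cref{le:trnsl.scond} is the right starting point (it is also how the paper proceeds), but both of your key steps go wrong. First, the ``transposition'': bending wires gives an order isomorphism between, say, $\cat{qRel}(\cX^*\times\cX,\mathbf{1})$ and $\cat{qRel}(\cX,\cX)$, under which $\ev_{\cX}$ corresponds to $\id_{\cX}$; it does \emph{not} identify the inequality $\hat\varepsilon^{\dag}\circ R\ge\ev_{\cX}$ with $R\circ\ev_{\cX}^{\dag}\ge\hat\varepsilon$ (these live in different hom-posets, and precomposition with the fixed relation $\ev_{\cX}^{\dag}$ is monotone but not order-reflecting). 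Worse, the inequality you arrive at points the wrong way. The correct manipulation, which is what the paper does, uses the adjunction available because $R:=\hat\Delta\circ(\hat s\times\id)$ is a \emph{map}: $\hat\varepsilon^{\dag}\circ R\ge\ev_{\cX}$ iff $\hat\varepsilon^{\dag}\ge\ev_{\cX}\circ R^{\dag}$, which upon daggering reads $\hat\varepsilon\ge R\circ\ev_{\cX}^{\dag}$ --- the reverse of your inequality. Already in $\cat{Rel}$ this is the difference between ``$i(x)x=e$ for all $x$'' (the correct reading of condition \Cref{item:le:trnsl.scond:l}) and ``$i(x)x=e$ for some $x$'' (what $R\circ\ev_{\cX}^{\dag}\ge\hat\varepsilon$ says).

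Second, your mechanism for upgrading the inequality to an equality fails because $\ev_{\cX}^{\dag}$ is \emph{not} a map: even classically, $\ev_{X}^{\dag}\:\mathbf{1}\to X\times X$ relates the point to every diagonal pair $(x,x)$, so it is not single-valued once $|X|\ge 2$, and the zig-zag identities do not change this (quantumly, a function $\mathbf{1}\to\cX^*\times\cX$ would be a character on $\ell^{\infty}(\cX)^{op}\stensor\ell^{\infty}(\cX)$, which does not exist when $\cX$ has an atom of dimension $\ge 2$). So you cannot conclude that $\hat\Delta\circ(\hat s\times\id)\circ\ev_{\cX}^{\dag}$ is a composite of maps; that it is a map at all is essentially part of what is being proved. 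The paper closes the gap from the other side: having obtained $\hat\varepsilon^{\dag}\ge\ev_{\cX}\circ R^{\dag}$, it invokes the \emph{minimality} of $\hat\varepsilon$ --- its associated projection is the support projection of the character $\varepsilon$, hence an atom --- together with $\ev_{\cX}\circ R^{\dag}\ne 0$ (since $\ev_{\cX}\circ R^{\dag}\circ R\ge\ev_{\cX}\ne 0$) to force equality. You mention this minimality in passing but dismiss it because you are looking at the reversed inequality; with the orientation corrected, minimality finishes the proof and the map-theoretic argument becomes unnecessary.
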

\begin{proof}
  Proving that the diagrams indeed amount precisely to \Cref{item:le:trnsl.scond.bis:l,item:le:trnsl.scond.bis:r} as indicated is a simple matter of recalling the former's very definition. Given \Cref{le:trnsl.scond}, it remains to equate \emph{its} conditions \Cref{item:le:trnsl.scond:l,item:le:trnsl.scond:r} to the present ones. The two being entirely parallel, we argue only the \Cref{item:le:trnsl.scond.bis:l} branch of the claim. To that end, simply note that for
  \begin{equation*}
    R
    :=
    \hat \Delta \circ (\hat s\times\id),
  \end{equation*}
  we have
  \begingroup
  \allowdisplaybreaks
  \begin{align*}
   \hat \varepsilon^{\dag}\circ R
    \ge
    \ev_{\cX}
    &\xLeftrightarrow{\quad\text{$R$ is a map}\quad}
     \hat \varepsilon^{\dag}
      \ge
      \ev_{\cX}\circ R^{\dag}\\
    &\xLeftrightarrow{\quad\text{$\hat \varepsilon$ is minimal}\quad}
     \hat \varepsilon^{\dag}
      =
      \ev_{\cX}\circ R^{\dag}\\
    & \Longleftrightarrow \text{condition \Cref{item:le:trnsl.scond.bis:l}$^{\dag}$}
  \end{align*}
  \endgroup
  and hence the conclusion. 
\end{proof}

Note that the argument proving \Cref{le:trnsl.scond} also effectively reduces \Cref{thm:us} to \Cref{th:is.kac.bis} (as announced in the Introduction), via \Cref{thm:Vaes}:

\begin{lemma}\label{le:supp.proj.dom}
  Given $(M=\ell^{\infty}(\cX),\Delta,\varepsilon)$ as in \Cref{thm:us}, the conditions \Cref{item:thm:us:l} and \Cref{item:thm:us:r} imposed on $s$ imply \Cref{item:thm:Vaes:mur} and \Cref{item:thm:Vaes:mul} of \Cref{thm:Vaes}, respectively.
  In particular, a quantum monoid satisfying the hypotheses of \Cref{thm:us} is a discrete quantum group. 
\end{lemma}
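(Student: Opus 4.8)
The plan is to exploit the translation of conditions \Cref{item:thm:us:l} and \Cref{item:thm:us:r} into the projection inequalities obtained in the course of proving \Cref{le:trnsl.scond}. Specifically, the computation \Cref{eq:long.eq} there shows that condition \Cref{item:thm:us:l} is equivalent to $\bigl((s\stensor\id)\circ\Delta\bigr)(e)\ge \delta_{M^{op}}$, where $e$ is the support projection of $\varepsilon$ and $\delta_{M^{op}}$ is the diagonal projection of $M^{op}$; symmetrically, condition \Cref{item:thm:us:r} is equivalent to $\bigl((\id\stensor s)\circ\Delta\bigr)(e)\ge \delta_{M}$. I would start from one of these, say the first, and aim to deduce condition \Cref{item:thm:Vaes:mur} of \Cref{thm:Vaes}: for every normal state $\nu$ on $M$ there is a normal state $\mu$ on $M$ with $(\mu\stensor\nu)(\Delta(e))>0$.

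The key idea is that a diagonal state is supported on $\delta_{M^{op}}$ and, by non-degeneracy of the diagonal projection of a hereditarily atomic von Neumann algebra (\cite[Proposition~A.1.2]{zbMATH07287276}), diagonal states detect every nonzero projection: if $q\in M$ is a nonzero projection then there is a diagonal state $\varphi$ on $M^{op}\stensor M$ with $\varphi(q^{op}\tensor q)>0$ — indeed $\delta_{M^{op}}$ is not orthogonal to $q^{op}\tensor q$. Given a normal state $\nu$ on $M$, pick a nonzero minimal projection $q\le\mathrm{supp}(\nu)$ (possible since $M$ is atomic), choose such a diagonal state $\varphi$, and feed it into condition \Cref{item:thm:us:l} in the form $\varphi\circ(s\stensor\id)\circ\Delta=\varepsilon$. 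Evaluating on $e$ gives $\varphi\bigl(((s\stensor\id)\circ\Delta)(e)\bigr)=\varepsilon(e)=1$, so $((s\stensor\id)\circ\Delta)(e)$ is not orthogonal to the support of $\varphi$. Since $\varphi$ is diagonal, its support lies under $\delta_{M^{op}}$, which in turn lies under the ``diagonal band'' built from $q$; chasing this through and using that $s$ is a unital normal $*$-homomorphism $M\to M^{op}$, one extracts a nonzero normal functional on $M$ — built from $\nu$ pushed through $s$ and paired appropriately — that witnesses $(\mu\stensor\nu)(\Delta(e))>0$ for a suitable normal state $\mu$. Concretely, $\mu$ should be (a normalization of) $\nu\circ s$ restricted via the minimal projection $q$; the positivity $(\mu\stensor\nu)(\Delta(e))>0$ then follows from the inequality $((s\stensor\id)\circ\Delta)(e)\ge\delta_{M^{op}}$ together with the fact that $(\mu\stensor\nu)$ does not annihilate $\delta_{M^{op}}$, which is exactly non-degeneracy of the diagonal projection.

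The symmetric argument, starting from condition \Cref{item:thm:us:r} and its projection form $((\id\stensor s)\circ\Delta)(e)\ge\delta_M$, yields condition \Cref{item:thm:Vaes:mul}. Finally, since a discrete quantum monoid satisfying both \Cref{item:thm:Vaes:mul} and \Cref{item:thm:Vaes:mur} is a discrete quantum group by \Cref{thm:Vaes}, the last sentence of the lemma follows immediately. I expect the main obstacle to be the bookkeeping in the previous paragraph: correctly identifying the normal state $\mu$ on $M$ (rather than on $M^{op}$ or on the tensor product) that pairs with $\nu$, and verifying that the resulting pairing against $\Delta(e)$ is strictly positive. This is essentially a matter of tracking how $s\stensor\id$ transports the diagonal projection $\delta_{M^{op}}$ of $M^{op}\stensor M$ to a projection dominated by $\Delta(e)$ inside $M\stensor M$, and then testing against the product state $\mu\stensor\nu$; the strict positivity is guaranteed precisely because $\delta_{M^{op}}$ is non-degenerate and every normal state on a hereditarily atomic von Neumann algebra has nonzero pairing with it.
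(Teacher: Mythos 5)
Your plan rests on the same two pillars as the paper's proof: the equivalence \Cref{eq:long.eq}, which converts condition \Cref{item:thm:us:l} of \Cref{thm:us} into the projection inequality $\wt e:=((s\stensor\id)\circ\Delta)(e)\ge\delta_{M^{op}}$, and the non-degeneracy of the diagonal projection of a hereditarily atomic von Neumann algebra. So the approach is essentially the paper's; what needs repair is the final positivity step. As written it does not typecheck: $\mu\stensor\nu$ is a normal state on $M\stensor M$ and cannot be paired with $\delta_{M^{op}}\in M^{op}\stensor M$, the closing claim that \emph{every} normal state has nonzero pairing with the diagonal projection is false (states supported on cross factors annihilate it), and your concrete candidate for $\mu$ (a normalization of $\nu\circ s$ cut down by $q$) is unjustified and unnecessary. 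The paper avoids naming $\mu$ altogether: it first observes that condition \Cref{item:thm:Vaes:mur} of \Cref{thm:Vaes} is equivalent to $(1\stensor p_{\nu})\Delta(e)\ne 0$, where $p_{\nu}$ is the support projection of $\nu$, because the slice $(\id\stensor\nu)(\Delta(e))$ is positive (Tomiyama) and is therefore annihilated by all normal states iff it vanishes. With that reduction the rest is a short contrapositive: if $(1\stensor p)\Delta(e)=0$ for some projection $p$, then $(1\stensor p)\wt e=(s\stensor\id)\bigl((1\stensor p)\Delta(e)\bigr)=0$, so $\wt e\ge\delta_{M^{op}}$ forces $(p\tensor p)\delta_{M^{op}}=0$ and hence $p=0$ by non-degeneracy. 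Adopting this reduction (and making explicit the step that transports non-orthogonality back through the homomorphism $s\stensor\id$, which you only gesture at) renders your minimal projection $q$ and the auxiliary diagonal state $\varphi$ superfluous and closes the argument.
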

\begin{proof}
  Observe first that \Cref{thm:Vaes}\Cref{item:thm:Vaes:mur} is equivalent to
  \begin{equation*}
    \left(1\stensor p_{\nu}\right)\Delta(e)\ne 0
    \quad\text{with}\quad
    p_{\nu}:=\text{support projection of }\nu,
  \end{equation*}
  given that the (positive \cite[Theorem 2]{tom-prod}) element $\left(\id\stensor\nu\right)\left(\Delta(e)\right)$ will be annihilated by all normal states precisely when it vanishes. It will thus suffice to prove that
  \begin{equation*}
    \text{\Cref{thm:us}\Cref{item:thm:us:l}}
    \xRightarrow{\quad}
    \left(1\stensor p\right) \wt{e}
    =
    \left(s\stensor\id\right)
    \left(\left(1\stensor p\right)\Delta(e)\right)
    \ne 0
    ,\quad
    \forall\text{ projection }p\ne 0,
  \end{equation*}
  where $\wt{e}:=\left((s\stensor\id)\circ\Delta\right)(e)$. We can prove this implication by reasoning, for any projection $p$ such that $\left(1\stensor p\right) \wt{e} = 0$, that

  \begin{equation*}
    \begin{aligned}
      \text{\Cref{thm:us}\Cref{item:thm:us:l}}
      \xRightarrow{\quad \text{\Cref{eq:long.eq}}\quad}    
      \wt{e}\ge \delta_M
      &\xRightarrow{\quad \left(1\stensor p\right) \wt{e}=0\quad}
        \left(p\stensor p\right)\delta_M=0\\
      &\xRightarrow{\quad\text{\cite[Proposition A.1.2]{zbMATH07828321}}\quad}
      p=0,
    \end{aligned}    
  \end{equation*}
  thus verifying the claim.
\end{proof}

\pf{th:is.kac.bis}
\begin{th:is.kac.bis}
Let $\cX$ be a quantum set such that $M\cong \ell^{\infty}(\cX)$; the atoms of $\cX$ are finite-dimensional Hilbert spaces that correspond to the factors of $M$. Because $(M,\Delta,\varepsilon)$ is a discrete quantum group, the atoms $X_{\beta}\in \At(\cX)$ also correspond to the irreducible representations $\beta$ of the compact quantum \emph{dual group} \cite[\S 3]{zbMATH04152742}, with each atom being a carrier of the corresponding representation.
We now write
\begin{itemize}[wide]
\item $X_{\bf 1}$ for (the carrier space of) the trivial representation;

\item $X_{\beta^*}$ for the dual representation $X^*_{\beta}$.
\end{itemize}
Without loss of generality, $M = \ell^\infty(\cX) \leq L(H)$, where $H = \bigoplus_\alpha X_\alpha$, as in \cref{eq:vlxly}.

\Cref{le:trnsl.scond} provides the inequalities
\begin{equation}\label{eq:sld.diags}
 \hat \varepsilon^{\dag}
  \circ
  \hat \Delta
  \circ
  (\hat s\times\id)
  \ge
  \ev_{\cX}
  \quad\text{and}\quad
 \hat \varepsilon^{\dag}
  \circ
  \hat \Delta
  \circ
  (\id\times \hat s)
  \ge
  \ev_{\cX^*}.
\end{equation}
Since $\hat \varepsilon$ is a function, i.e., a morphism in $\cat{qRel}$ that is associated to a $W^*$-morphism, we also have
\begin{equation*}
  \id\ge \hat \varepsilon\circ\hat \varepsilon^{\dag}
  \quad\text{and}\quad
  \id\le\hat \varepsilon^{\dag}\circ \hat \varepsilon. 
\end{equation*}
These render \Cref{eq:sld.diags} equivalent to 
\begin{equation}\label{eq:sld.diags.movepsilon}
  \hat \Delta
  \circ
  (\hat s\times\id)
  \ge
  \hat \varepsilon
  \circ
  \ev_{\cX}
  \quad\text{and}\quad
  \hat \Delta
  \circ
  (\id\times \hat s)
  \ge
  \hat \varepsilon
  \circ
  \ev_{\cX^*},
\end{equation}
respectively, with the left-hand sides of the inequalities again maps.

The dagger adjoint of $\hat \varepsilon \circ \ev_{\cX^*}$ in \Cref{eq:sld.diags.movepsilon} is the relation
\begin{equation}\label{eq:rxaxb}
  R(X_{\alpha},\ X_{\beta}\otimes X_{\gamma})
  =\bC \delta_{\mathbf{1},\alpha}\delta_{\gamma,\beta^*}\db_{\beta},
\end{equation}
where
\begin{equation}\label{eq:db}
  \bC
  \ni 1
  \xmapsto{\quad\db_{\beta}\quad}
  \sum_{\text{basis $e_i\in X_{\beta}$}} e_i\otimes e_i^*
  \in
  X_{\beta}\otimes X^*_{\beta}
  =
  X_{\beta}\otimes X_{\beta^*}  
\end{equation}
is the usual coevaluation (the symbol stands for \emph{dual basis}). The dagger adjoint of $\hat \varepsilon \circ \ev_{\cX}$ may be described similarly.

Without loss of generality, we focus on the right-hand inequality in \Cref{eq:sld.diags.movepsilon} and denote by
\begin{equation*}
  \begin{tikzpicture}[>=stealth,auto,baseline=(current  bounding  box.center)]
    \path[anchor=base] 
    (0,0) node (l) {$M$}
    +(2,.5) node (u) {$M\stensor M$}
    +(4,0) node (r) {$M\stensor M^{op}$}
    ;
    \draw[->] (l) to[bend left=6] node[pos=.5,auto] {$\scriptstyle \Delta$} (u);
    \draw[->] (u) to[bend left=6] node[pos=.5,auto] {$\scriptstyle \id\stensor s$} (r);
    \draw[->] (l) to[bend right=6] node[pos=.5,auto,swap] {$\scriptstyle \psi$} (r);
  \end{tikzpicture}
\end{equation*}
the corresponding $W^*$-morphism.
By \Cref{re:intertw}, the quantum relation $\cV\leq L(H \tensor H^*,H)$ that is associated with the $W^*$-morphism $\psi$
is the space 
\begin{equation*}
  \cV:=\left\{T\in L(H \tensor H^*,H)\ :\ mT=T\psi(m),\ \forall m\in M\right\}
\end{equation*}
of intertwiners for the two $M$-representations on $H$ and $H \tensor H^*$. The right-hand inequality of \Cref{eq:sld.diags.movepsilon} requires precisely that the operators $\db_\beta$ of \Cref{eq:db} satisfy $\db_\beta^\dagger \in \cV$, i.e., that
\begin{equation*}
\psi(m) \db_\beta = \db_\beta m
\end{equation*}
for all $m \in M$ and all irreducible representations $\beta$ of the compact quantum dual group of $(M, \Delta, \varepsilon)$.

The operators $\db_\beta$ also satisfy $\db_\beta m = \varepsilon(m) \db_\beta$ for all $m \in M$ because they all satisfy $\db_\beta e = \db_\beta$. Thus, the right-hand inequality of \Cref{eq:sld.diags.movepsilon} is equivalent to
\begin{equation}\label{eq:scale.db}
  \left(\forall m\in M\right)
  \left(\forall \beta\right)
  \quad:\quad
  \psi(m)\db_{\beta}=\varepsilon(m)\db_{\beta}
  ,  
\end{equation}
where we may identify the operator $\db_\beta$ of \Cref{eq:db} with the vector $\db_\beta(1) = \sum e_i\otimes e_i^* \in X_\beta \tensor X_\beta^*$.

The action of
\begin{equation*}
  L(X_{\beta})\otimes L(X_{\beta})^{op}
  \le
  M\stensor M^{op}
\end{equation*}
on
\begin{equation}\label{eq:hbetahbeta}
  X_{\beta}\otimes X_{\beta}^*\cong L(X_{\beta})
\end{equation}
is nothing but multiplication: $(a\otimes b)\triangleright x:=axb $. The isomorphism \Cref{eq:hbetahbeta} identifies the element $\db_{\beta}\in X_{\beta}\otimes X_{\beta}^*$ with the identity $1 \in L(X_\beta)$, so that \Cref{eq:scale.db} simply says that
\begin{equation}\label{eq:char.antp}
  \forall m\in M
  \quad:\quad
  \mathrm{mult}\left(((\id \stensor s) \circ \Delta) (m)\right)
  =
  \varepsilon(m)1\in M,
\end{equation}
where $\mathrm{mult}\: M \stensor M^{op} \to \mathrm{Mult}(M)$ is the multiplication/composition map and $\mathrm{Mult}(M) \iso \ell(\cX)$ is the multiplier algebra of $M$; see \cite[Definition~5.1]{zbMATH07287276}. Therefore, the right-hand inequality in \Cref{eq:sld.diags.movepsilon} is equivalent to \Cref{eq:char.antp}, and we have a similar equivalence for the left-hand inequality.

Recalling the notation $M_{00}$ for the algebraic direct sum $\bigoplus_{\alpha}L(H_{\alpha})\le M$, $s$ restricts to a $*$-morphism $M_{00}\to M$, which satisfies \Cref{eq:char.antp} and its $s$-on-the-left variant iff $s$ is the antipode of $(M, \Delta, \varepsilon)$ (as noted, for instance, immediately following \cite[Theorem 4.6]{zbMATH00569708}). Therefore, if $s$ satisfies \Cref{eq:char.antp} and its left variant, then $s$ restricts to the antipode, which is then a $*$-map, and if the antipode is a $*$-map, then it extends to a $W^*$-morphism $s$ that satisfies \Cref{eq:char.antp} and its left variant.

We had already shown that \Cref{eq:char.antp} and its left variant are together equivalent to the pair of inequalities in \Cref{eq:sld.diags}, which are equivalent to conditions \Cref{item:th:is.kac.bis:l} and \Cref{item:th:is.kac.bis:r} of \Cref{th:is.kac.bis} by \Cref{le:trnsl.scond}, so the theorem is proved.
\end{th:is.kac.bis}

Given their importance in the above discussion, it will be of some interest to have a fairly hands-on description for the diagonal states on $M\stensor M^{op}$.

\begin{proposition}\label{pr:diag.st}
  For a hereditarily atomic von Neumann algebra $M$ with a decomposition \Cref{eq:prod.dec}, the diagonal states on $M \stensor M^{op}$ with the decomposition \Cref{eq:mmop} are precisely those of the form
  \begin{equation*}
    \sum_{i\in I}c_i \mathrm{tr}_i\circ\mathrm{mult}_i
    ,\quad
    c_i\ge 0
    ,\quad
    \sum_i c_i=1,
  \end{equation*}
  where $M_{n_i}\xrightarrow{\tr_i}\bC$ and $M_{n_i}\stensor M^{op}_{n_i}\xrightarrow{\mathrm{mult}_i}M_{n_i}$ are the normalized trace and the multiplication maps, respectively. 
\end{proposition}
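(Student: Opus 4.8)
The plan is to reduce the statement to the case of a single matrix factor $M_n$ and then invoke the well-known structure of the diagonal projection there. First I would record that, by the product-preservation of $M\stensor\bullet$ recalled at the start of \Cref{se:main}, the decomposition \Cref{eq:mmop} identifies $M\stensor M^{op}$ with $\prod^{W^*}_{i,j} M_{n_i}\otimes M_{n_j}^{op}$, and that the diagonal projection $\delta_M$ decomposes accordingly. The key observation is that $\delta_M$ is supported on the "diagonal block" $\bigoplus_i M_{n_i}\otimes M_{n_i}^{op}$: for $i\ne j$ the central projection $p$ cutting out the $i$-th factor satisfies $p\tensor(1-p)\ge$ (the unit of the $(i,j)$ block), so orthogonality to all $p\tensor(1-p)$ forces $\delta_M$ to vanish off the diagonal blocks. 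Within the $i$-th diagonal block $M_{n_i}\stensor M_{n_i}^{op}\cong L(X_i)\stensor L(X_i)^{op}$, acting on $X_i\otimes X_i^*\cong L(X_i)$ by $(a\otimes b)\triangleright x = axb$ exactly as in the proof of \Cref{th:is.kac.bis}, the diagonal projection is the rank-one projection onto $\bC\cdot\db_{X_i}$, i.e. onto the span of the identity operator $1\in L(X_i)$; this is precisely the statement that $\delta_{M_{n_i}}$ is the normalized trace vector state, which is standard (and is exactly the $\db_\beta$ computation already used above, specialized to the ambient algebra being a single factor).

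Granting this, a normal state $\mu$ on $M\stensor M^{op}$ is a diagonal state iff it is supported on $\delta_M=\bigoplus_i\delta_{M_{n_i}}$, hence iff it is supported on the diagonal blocks and, restricted to each block, supported on the rank-one projection onto $\bC\cdot 1_{L(X_i)}$. A normal state supported on an orthogonal family of projections $\{q_i\}$ with $\sum q_i = \delta_M$ is automatically a convex combination $\sum_i c_i\,\mu_i$ with $c_i = \mu(q_i)\ge 0$, $\sum_i c_i = 1$, and each $\mu_i$ a normal state supported on $q_i$. When $q_i$ is the rank-one projection onto $\bC\cdot\db_{X_i}$ inside $L(X_i)\stensor L(X_i)^{op}$, there is a unique such state, namely the vector state at $\db_{X_i}/\|\db_{X_i}\|$; transporting through $X_i\otimes X_i^*\cong L(X_i)$ and the identification of $\db_{X_i}$ with $1\in L(X_i)$, this vector state is exactly $\tr_i\circ\mathrm{mult}_i$, where $\mathrm{mult}_i\: M_{n_i}\stensor M_{n_i}^{op}\to M_{n_i}$ is the multiplication map and $\tr_i$ the normalized trace. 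Concretely one checks $(\tr_i\circ\mathrm{mult}_i)(a\otimes b) = \tfrac1{n_i}\tr(ab) = \langle \db_{X_i}/\|\db_{X_i}\|,\ (a\otimes b)\triangleright(\db_{X_i}/\|\db_{X_i}\|)\rangle$ since $\|\db_{X_i}\|^2 = n_i$. Conversely every map of the displayed form is manifestly a normal state and is supported on $\delta_M$ because $\mathrm{mult}_i$ annihilates $p\tensor(1-p)$ for every projection $p\in M_{n_i}$ (as $p(1-p)=0$), so it is a diagonal state; extending to all projections of $M$ is immediate since the central projections of the other factors are also killed.

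I expect the only genuinely substantive point to be the identification of the diagonal projection of a single matrix factor with the rank-one projection onto the maximally entangled vector $\db_{X_i}$ — but this is exactly the computation carried out in the proof of \Cref{th:is.kac.bis} around \Cref{eq:db}–\Cref{eq:hbetahbeta}, so it can be cited rather than redone. Everything else — the vanishing of $\delta_M$ off the diagonal blocks, the convex-decomposition of a normal state supported on an orthogonal family of projections, and the elementary trace identity — is routine. The main obstacle, such as it is, is bookkeeping: being careful that the isomorphism \Cref{eq:mmop} matches the decomposition of $\delta_M$ and that the normalizations in $\tr_i$ and $\db_{X_i}$ are consistent.
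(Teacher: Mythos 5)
Your argument is correct and follows essentially the same route as the paper's: both reduce to a single factor by observing that diagonal states kill the cross-blocks $M_{n_i}\otimes M_{n_j}^{op}$, $i\ne j$, and then invoking the convex decomposition of a normal state over central summands, and both then work in the standard representation of $M_n\otimes M_n^{op}$ on $L^2(M_n,\tr)$, where the decisive computation in each case is that $px(1-p)=0$ for every projection $p$ forces $x\in\bC 1$. The only difference is packaging --- you compute the diagonal projection to be the rank-one projection onto the line of the identity (the maximally entangled vector) and use uniqueness of the normal state supported on a minimal projection, whereas the paper decomposes the diagonal state into vector states $\varphi_a$ and shows each forces $a$ to be scalar --- but note that this rank-one identification is not actually established in the proof of \Cref{th:is.kac.bis} (which only sets up the identification $X_\beta\otimes X_\beta^*\cong L(X_\beta)$ and the action $(a\otimes b)\triangleright x=axb$), so you would need to include that one-line computation rather than cite it.
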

\begin{proof}
  That diagonal states annihilate the cross-factors $M_{n_i}\otimes M^{op}_{n_j}$, $i\ne j$, of \Cref{eq:mmop} follows immediately from $\varphi(p_i\otimes (1-p_i))=0$ for the central projection $p_i$ that is the unit of the factor $M_{n_i}$. The normality of $\varphi$ implies \cite[Corollary III.3.11]{tak1} that there is a convex-combination decomposition
  \begin{equation*}
    \varphi
    =
    \sum_{i\in I}c_i \varphi_i
    ,\quad
    c_i\ge 0
    ,\quad
    \sum_i c_i=1
  \end{equation*}
  for states $\varphi_i$ on $M_{n_i}\otimes M_{n_i}^{op}$, so it is enough to prove that $\mathrm{tr}\circ\mathrm{mult}\: M_n\otimes M_n^{op} \to M_n$ is the unique diagonal state on $M_n\otimes M_n^{op}$. 

  The $M_n$-bimodule structure on $M_n$ realizes the latter when it is equipped with the inner product
  \begin{equation*}
    \braket{a\mid b}:=\tr(a^*b),
  \end{equation*}
  being the carrier Hilbert space of a \emph{standard representation} \cite[Definition IX.1.13]{tak2} of $M_n$  and hence also of a representation of $M_n\otimes M_n^{op}$. Our state $\varphi$ is a convex combination of vector states
  \begin{equation*}
    M_n\otimes M_n^{op}
    \ni
    x\otimes y
    \xmapsto{\quad\varphi_a\quad}
    \tr(a^* xay),
  \end{equation*}
  each of them diagonal if $\varphi$ is, because each will be dominated by a scalar multiple of $\varphi$. For $\varphi_a$, being diagonal reads
  \begin{equation*}
    \tr\left(a^* p a (1-p)\right)=\tr\left((pa(1-p))^*(pa(1-p))\right)=0
    \xRightarrow{\quad}
    pa(1-p)=0
    ,\quad
    \forall \text{ projection }p\in M_n. 
  \end{equation*}
  This means that $a^\dagger\in M_n$ leaves all the subspaces $\im p\le \bC^n$ invariant and so must be a scalar. 
\end{proof}

\begin{remark}\label{re:lideal.qrel}
  The classification of diagonal states in \Cref{pr:diag.st} connects well to the theory of quantum relations developed in \cite{zbMATH06008057}.

  To expand, consider a finite-dimensional von Neumann algebra $M$ and represent $M\otimes M^{op}$ on $L^2(M,\tau)$ for a faithful tracial state $\tau$. A diagonal state on $M\otimes M^{op}$ is a linear combination of vector states associated with vectors annihilated by the left ideal generated by the projections $p\otimes (1-p)$; that annihilation implies centrality in $M$.

  This left ideal, which is nothing but the kernel of the multiplication map $M\otimes M^{op}\to M$ \cite[Theorem 0.1]{MR4941556}, corresponds via \cite[Proposition 2.23]{zbMATH06008057} to the \emph{equality (quantum) relation} of \cite[\S 1.1, p.338]{zbMATH07828321}. A justification for the term and a manifestation of how that relation represents equality is evident in \Cref{le:distinguish.morphisms}, which restates \cite[Proposition~3.5.4]{zbMATH07828321} in elementary terms.
\end{remark}

\begin{lemma}\label{le:distinguish.morphisms}
  Two $W^*$-morphisms $M\xrightarrow{f,g}N$ between hereditarily atomic von Neumann algebras are equal if and only if the composition
  \begin{equation*}
    M\stensor M^{op}
    \xrightarrow{\quad f\stensor g\quad}
    N\stensor N^{op}
    \xrightarrow{\quad\varphi\quad}
    \bC
  \end{equation*}
  is diagonal for all diagonal states $\varphi$.   
\end{lemma}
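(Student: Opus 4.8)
The plan is to prove the two implications of \Cref{le:distinguish.morphisms} separately. The forward direction is immediate: if $f=g$, then for every projection $p\in M$ one has $(f\stensor g)(p\otimes(1-p))=f(p)\otimes(1-f(p))$ with $f(p)$ a projection in $N$, so for every diagonal state $\varphi$ on $N\stensor N^{op}$,
\[
  (\varphi\circ(f\stensor g))(p\otimes(1-p))=\varphi\big(f(p)\otimes(1-f(p))\big)=0;
\]
since $\varphi\circ(f\stensor g)$ is normal (a composite of normal maps), it is a diagonal state.

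For the converse, suppose $\varphi\circ(f\stensor g)$ is diagonal for every diagonal state $\varphi$ on $N\stensor N^{op}$. Write $N\cong\prod_j^{W^*}M_{m_j}$, let $\pi_j\colon N\to M_{m_j}$ be the coordinate projections, and let $\varphi_j$ be the state $x\otimes y\mapsto\tr_j\big(\pi_j(x)\pi_j(y)\big)$ on $N\stensor N^{op}$, which is diagonal -- a special case of \Cref{pr:diag.st}, or directly, since $\tr_j\big(\pi_j(q)(1-\pi_j(q))\big)=0$ for any projection $q$. Being diagonal, the pulled-back state $\varphi_j\circ(f\stensor g)$ on $M\stensor M^{op}$ annihilates $q\otimes(1-q)$ for every projection $q\in M$; applying this with $q=p$ and again with $q=1-p$, and writing $e=(\pi_j f)(p)$ and $e'=(\pi_j g)(p)$ (projections in $M_{m_j}$), we obtain $\tr_j\big(e(1-e')\big)=0$ and $\tr_j\big((1-e)e'\big)=0$. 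Since $\tr_j\big(e(1-e')\big)=\tr_j\big(((1-e')e)^*\,(1-e')e\big)$ and $\tr_j$ is faithful -- precisely the argument in the proof of \Cref{pr:diag.st} -- the first identity forces $(1-e')e=0$, i.e.\ $\im e\subseteq\im e'$, and the second forces, symmetrically, $\im e'\subseteq\im e$; hence $e=e'$. Thus $(\pi_j f)(p)=(\pi_j g)(p)$ for every $j$, i.e.\ $f(p)=g(p)$, for every projection $p\in M$. Finally, because $M$ is hereditarily atomic, $M\cong\prod_i^{W^*}M_{n_i}$ and $M_{00}=\bigoplus_i M_{n_i}$ is weak$^*$-dense in $M$ and spanned by projections of $M$; since $f$ and $g$ are normal, hence weak$^*$-continuous, and agree on $M_{00}$, they agree on its weak$^*$-closure $M$.

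There is no essential obstacle: once \Cref{pr:diag.st} is in hand the lemma is light, and the only two things to keep track of are the faithfulness-of-trace step, which converts the two trace identities into $e=e'$, and the closure step, where hereditary atomicity (equivalently, weak$^*$-density of $M_{00}$) is what promotes ``$f$ and $g$ agree on every projection'' to ``$f=g$''. A more conceptual alternative, in the spirit of \Cref{re:lideal.qrel}, sidesteps the factorwise bookkeeping: unwinding the definitions exactly as in \Cref{eq:long.eq} shows the hypothesis is equivalent to $(f\stensor g)(\delta_M)\ge\delta_N$; identifying $\delta_M$ with the identity relation $\id_{\cX}$ and computing the pushforward by \Cref{pr:proj.compat.mor} rewrites this as $\hat g^{\dag}\circ\hat f\ge\id_{\cY}$, whereupon a short manipulation using that $\hat f$ and $\hat g$ are maps (compose on the left with $\hat g$, use $\hat g\circ\hat g^{\dag}\le\id$, then apply the dagger) gives $\hat f=\hat g$, i.e.\ $f=g$.
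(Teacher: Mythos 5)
Your proof is correct and follows essentially the same route as the paper's: both reduce via \Cref{pr:diag.st} to the factorwise trace states $\tr_j\circ\mathrm{mult}_j$ and hinge on the computation $\tr(e(1-e'))=0=\tr((1-e)e')\Rightarrow e=e'$ by faithfulness of the trace. You simply spell out more explicitly the reduction steps (and the final density argument promoting agreement on projections to $f=g$) that the paper compresses into its normalization $N\cong M_n$, $M\cong\bC^2$.
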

\begin{proof}
  By the classification in \Cref{pr:diag.st}, there is no loss of generality in assuming $N\cong M_n$. Diagonal states of course restrict to diagonal states along embeddings, so we can also set $M\cong \bC^2$. Morphisms then simply select projections $p,q\in M_n$, and the claim is just that
  \begin{equation*}
    \tr(p(1-q)) = 0 = \tr((1-p)q)
    \xRightarrow{\quad}
    p=q.
  \end{equation*}
  Indeed, the hypothesis implies that $p(1-q)$ and $(1-p)q$ both vanish, yielding $p = q$.
\end{proof}

\section{Revisiting arbitrary discrete quantum groups}\label{se:dqg.nk}

\begin{theorem}\label{th:dqg.nk}
Let $M\cong \ell^{\infty}(\cX)$ be a hereditarily atomic von Neumann algebra, and let
  \begin{equation*}
    M\xrightarrow{\quad\Delta\quad}
    M\stensor M
    \quad\text{and}\quad
    M\xrightarrow{\quad\varepsilon\quad}\bC
  \end{equation*}  
  be $W^*$-morphisms such that
  \begin{equation*}
    (\Delta \stensor \id)\circ \Delta = (\id \stensor \Delta) \circ \Delta
    ,\quad
    (\varepsilon \stensor \id) \circ \Delta = \id,
    \quad\text{and}\quad
    (\id \stensor \varepsilon) \circ \Delta = \id.
  \end{equation*}
In other words, let $(M, \Delta, \varepsilon)$ be a discrete quantum monoid, as discussed in the Introduction.
  \begin{enumerate}[(1),wide]
  \item 
  If there exists a relation $\cX^*\xrightarrow{R}\cX$ such that
  \begin{enumerate}[(a),wide]
    \item\label{item:th:dqg.nk:sleft} $\hat \varepsilon^{\dag}\circ \hat \Delta\circ(R \times \id)
      \ge
      \ev_{\cX}$,
    \item\label{item:th:dqg.nk:sright} $\hat \varepsilon^{\dag}\circ \hat \Delta\circ(\id\times R)
      \ge
      \ev_{\cX^*}$,
    \end{enumerate}
    then $(M, \Delta, \varepsilon)$ is a discrete quantum group.
    
  \item Conversely, if $(M, \Delta, \varepsilon)$ is a discrete quantum group, then the relation $\cX^*\xrightarrow{R}\cX$ given by
  \begin{equation}\label{eq:Risshat}
      R\left(X_{\alpha}^*,X_{\beta}\right)
      :=
      \left\{
        T\in L\left(X_{\alpha}^*,X_{\beta}\right)
        \ :\
        Ts(m)
        =
        mT
        ,\quad
        \forall m\in M_{00}
      \right\}
  \end{equation}
 satisfies \Cref{item:th:dqg.nk:sleft} and \Cref{item:th:dqg.nk:sright} above, where $s\: M_{00} \to M_{00}^{op}$ is the antipode \cite[\S~4]{zbMATH00869786}.
  \end{enumerate}
\end{theorem}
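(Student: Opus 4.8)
The plan is to leverage the machinery already built in \Cref{se:main}, reducing \Cref{th:dqg.nk} to statements that are either special cases of \Cref{th:is.kac.bis}'s proof or mild generalizations of it where ``$\hat s$ is a map'' is relaxed to ``$R$ is an arbitrary relation.'' The first observation is that a relation $R\:\cX^*\to\cX$ corresponds, via the projection picture of \Cref{not:rel.proj.fn} and \Cref{pr:proj.compat.mor}, to an element that we can feed into the computation \Cref{eq:long.eq}; the key point is that the derivation in \Cref{le:trnsl.scond} and the unwinding in the proof of \Cref{th:is.kac.bis} only used that $\hat s$ was \emph{some} relation up until the very last step, where mapness of $R=\hat\Delta\circ(\hat s\times\id)$ was invoked. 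So I would first re-examine which parts survive: conditions \Cref{item:th:dqg.nk:sleft} and \Cref{item:th:dqg.nk:sright} are, by the same chain of equivalences as in \Cref{le:trnsl.scond} and then \Cref{eq:sld.diags.movepsilon}, equivalent to $\hat\Delta\circ(R\times\id)\ge\hat\varepsilon\circ\ev_{\cX}$ and $\hat\Delta\circ(\id\times R)\ge\hat\varepsilon\circ\ev_{\cX^*}$, and by taking dagger adjoints and composing these translate into containments $\db_\beta^\dagger\in\cV$ (resp.\ the left-handed analogue), where now $\cV$ is the intertwiner space associated to the $W^*$-morphism $\psi$ built from $R$ through the bijection of \Cref{re:intertw}.

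For part (1): given such an $R$, I would produce the diagonal-state conditions of \Cref{thm:Vaes} exactly as in \Cref{le:supp.proj.dom}. Indeed \Cref{item:th:dqg.nk:sright} gives, via the identical argument (\Cref{eq:long.eq} applied to the relation $R$ in place of $\hat s$), that the element $\wt e := P_{\hat\varepsilon^\dagger\circ\hat\Delta\circ(R\times\id)}$ dominates $\delta_M$, whence for any projection $p$ with $(1\stensor p)\wt e=0$ we get $(p\stensor p)\delta_M=0$ and therefore $p=0$ by \cite[Proposition A.1.2]{zbMATH07828321}; pushing $\wt e$ forward appropriately yields condition \Cref{item:thm:Vaes:mur}, and symmetrically \Cref{item:th:dqg.nk:sleft} yields \Cref{item:thm:Vaes:mul}. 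Then \Cref{thm:Vaes} concludes that $(M,\Delta,\varepsilon)$ is a discrete quantum group. The only care needed is that $R$ need not be a map, but the argument of \Cref{le:supp.proj.dom} never used mapness of $\hat s$ — it only used that $\hat s$ is a relation and that \Cref{eq:long.eq}'s equivalences hold, which they do for any relation.

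For part (2): I would show that the relation $R$ defined by \Cref{eq:Risshat} — which is precisely $\hat s$ in the notation of \Cref{se:main} when $s$ is viewed as the (possibly non-$*$) antipode — satisfies the two inequalities. Here the proof of \Cref{th:is.kac.bis} is directly repurposed: the calculation identifying the right-hand inequality of \Cref{eq:sld.diags.movepsilon} with \Cref{eq:char.antp}, namely $\mathrm{mult}((\id\stensor s)\circ\Delta)(m))=\varepsilon(m)1$, is the defining property of the antipode of a multiplier Hopf $*$-algebra (the reference ``immediately following \cite[Theorem 4.6]{zbMATH00569708}''), hence holds for the genuine antipode $s\:M_{00}\to M_{00}^{op}$ of our discrete quantum group, and likewise for the left-handed version. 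Tracing the equivalences backward — \Cref{eq:char.antp}$\Leftrightarrow$\Cref{eq:scale.db}$\Leftrightarrow$ right inequality of \Cref{eq:sld.diags.movepsilon}$\Leftrightarrow$\Cref{item:th:dqg.nk:sright}, and symmetrically on the left — gives the claim. I expect the main obstacle to be bookkeeping rather than conceptual: one must check that \Cref{eq:Risshat} really recovers $\hat s$ under the intertwiner description of \Cref{re:intertw} (the antipode is only defined on $M_{00}$, not as a $W^*$-morphism, so one works at the level of the algebraic core and the associated family of subspaces $R(X_\alpha^*,X_\beta)$ directly rather than through a $W^*$-morphism), and that the inequalities \Cref{item:th:dqg.nk:sleft}, \Cref{item:th:dqg.nk:sright} — which in the Kac case were upgraded to equalities using mapness of $\hat s$ — are already implied in the general case without that upgrade. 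Since \Cref{eq:sld.diags} was stated as an inequality and only sharpened to an equality via \Cref{le:trnsl.scond.bis} under the mapness hypothesis, this causes no difficulty: the inequalities are exactly what the antipode delivers.
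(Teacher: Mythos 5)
Your proposal is essentially correct, and part (2) follows the paper's own route: the paper likewise invokes the defining identity $\mathrm{mult}\circ(\id\otimes s)\circ\Delta=\varepsilon=\mathrm{mult}\circ(s\otimes\id)\circ\Delta$ of the antipode on $M_{00}$ and observes that this is condition \Cref{eq:char.antp}, so that the proof of \Cref{th:is.kac.bis} "runs in reverse" at the level of the algebraic core, exactly as you describe. For part (1), however, the paper takes a different and shorter path: it precomposes \Cref{item:th:dqg.nk:sleft} and \Cref{item:th:dqg.nk:sright} with $\top^{\cX^*}\times\id$ and $\id\times\top^{\cX^*}$ respectively, so that $R$ disappears entirely (being dominated by $\top$), leaving $\hat\varepsilon^{\dag}\circ\hat\Delta\circ(\top^{\cX}\times\id)\ge\top_{\cX}$ and its mirror image; via \Cref{res:nonkac.cj}\Cref{item:res:nonkac.cj:proj.dom} these unpack to "$(1\otimes p)\ge\Delta(e)$ or $(p\otimes1)\ge\Delta(e)$ forces $p=1$," which is Vaes's characterization. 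Your route instead adapts \Cref{le:supp.proj.dom}, keeping $R$ in play; this works, but two steps need more care than your sketch provides. First, for a general relation $R$ there is no $W^*$-morphism $\psi$ and hence no intertwiner space $\cV$ in the sense of \Cref{re:intertw}, so the reformulation "$\db_\beta^\dagger\in\cV$" is unavailable in part (1) (you partly self-correct on this in part (2), but it should not appear in part (1) at all — fortunately it is not needed there, since condition \Cref{item:th:dqg.nk:sleft} already reads $\wt e\ge\delta$ directly by order-preservation of $\bullet\mapsto P_\bullet$). Second, the link back to Vaes's condition requires $(1\stensor p)\Delta(e)=0\Rightarrow(1\stensor p)\wt e=0$; in \Cref{le:supp.proj.dom} this was immediate from $\wt e=(s\stensor\id)(\Delta(e))$ and unitality of $s$, but here it must be rederived relationally, e.g.\ from $\hat\varepsilon^{\dag}\circ\hat\Delta\le\top_{\cX}\times\widehat{(1-p)}$ one gets $\hat\varepsilon^{\dag}\circ\hat\Delta\circ(R\times\id)\le(\top_{\cX}\circ R)\times\widehat{(1-p)}\le\top_{\cX^*}\times\widehat{(1-p)}$ by monotonicity of composition in the quantaloid. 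Your phrase "pushing $\wt e$ forward appropriately" gestures at this but does not supply it. With those two points made precise, your argument is a valid alternative to the paper's; what the paper's $\top$-precomposition buys is that no properties of $R$ beyond its mere existence are ever used, while your version makes visible that the same diagonal-nondegeneracy mechanism (\cite[Proposition A.1.2]{zbMATH07828321}) underlies both the Kac and non-Kac statements.
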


\begin{remarks}\label{res:nonkac.cj}
  \begin{enumerate}[(1),wide]
  \item If $(M, \Delta, \varepsilon)$ is a discrete quantum group of Kac type, then its antipode extends to a $W^*$-morphism $M \to M^{op}$, and the relation $R$ in \Cref{eq:Risshat} is clearly $R = \hat s$, as in \Cref{re:intertw}.
  \item\label{item:res:nonkac.cj:nonkac.cj} \Cref{th:dqg.nk} provides a non-Kac version of \cite[p.17, Conjecture]{kor_qgr_sld}. The difference lies simply in requiring that the antipode $s$ be a \emph{relation} rather than \emph{function}.

  \item\label{item:res:nonkac.cj:proj.dom} Recall \Cref{not:rel.proj.fn}. We will have some use for the observation that for projections
    \begin{equation*}
      p\in \ell^{\infty}\left(\cY\right)
      \quad\text{and}\quad
      q\in \ell^{\infty}\left(\cX\right)\stensor \ell^{\infty}\left(\cY\right)
    \end{equation*}
    we have
    \begin{equation*}
      1\otimes p\ge q
      \xLeftrightarrow{\quad}
      p\ge P_{\hat q \circ (\top^{\cX} \times \id_\cY)},
    \end{equation*}
    where $p$ and $q$ are regarded as being associated to relations of type $\cY\to \mathbf{1}$ and $\cX \times \cY \to \mathbf{1}$, respectively \cite[Proposition~3.2.3]{zbMATH07828321}.
  \end{enumerate}
\end{remarks}

\pf{th:dqg.nk}
\begin{th:dqg.nk}
  \begin{enumerate}[(1),wide]
  \item
    Precomposing \Cref{item:th:dqg.nk:sleft} above with $\top^{\cX^*}\times\id$ produces 
    \begin{equation*}
     \hat \varepsilon^{\dag}\circ \hat \Delta\circ\left(\top^{\cX}\times \id\right)
      \ge
     \hat \varepsilon^{\dag}\circ \hat \Delta\circ\left(R \circ \top^{\cX^*}\times \id\right)
      \ge
      \ev_{\cX}\circ\left(\top^{\cX^*}\times\id\right)
      =
      \top_{\cX},
    \end{equation*}
    and similarly, precomposing \Cref{item:th:dqg.nk:sright} with $\id\times \top^{\cX^*}$ produces 
    \begin{equation*}
     \hat \varepsilon^{\dag}\circ \hat \Delta\circ\left(\id\times \top^{\cX}\right)
      \ge
      \ev_{\cX^*}\circ\left(\id\times \top^{\cX^*}\right)
      =
      \top_{\cX}.
    \end{equation*}
    These are precisely the two conditions imposed in \cite[p.16, Theorem]{kor_qgr_sld} and unpack (via \Cref{res:nonkac.cj}\Cref{item:res:nonkac.cj:proj.dom}) to the requirement that
    \begin{equation*}
      \forall\text{ projection }p\in M
      \quad:\quad
      (p\otimes 1) \ge \Delta(e) \text{ or }(1\otimes p)\ge \Delta(e)
      \xRightarrow{\quad}p=1,
    \end{equation*}
    where $e\in M$ is the support projection of $\varepsilon$. That this makes $(M,\Delta,\varepsilon)$ into a discrete quantum group is what the already-cited \cite{362309} shows.

  \item Conversely, consider a discrete quantum group $(M,\Delta,\varepsilon)$. We reprise the notation in the proof of \Cref{th:is.kac.bis}, setting $M\cong \ell^{\infty}(\cX)$ with $\At(\cX)=\{X_{\alpha}\}$. Being a quantum group, $(M,\Delta,\varepsilon)$ admits an anti-multiplicative antipode \cite[\S 4]{zbMATH00869786}
    \begin{equation*}
      M_{00}
      \xrightarrow{\quad s\quad}
      M_{00}
      \subseteq
      \mathrm{Mult}(M_{00})
      :=
      \prod_{X_{\alpha}\in \At(\cX)}L(X_{\alpha})
    \end{equation*}
    that maps each $L(X_{\alpha})$ bijectively onto $L(X_{\alpha^*})$ and that satisfies
    \begin{equation}\label{eq:what.antipodes.do}
       \mathrm{mult}\circ(\id\otimes s)\circ\Delta
        =
        \varepsilon
        =
        \mathrm{mult}\circ(s\otimes \id)\circ\Delta
      :
      M_{00}
      \xrightarrow{
        \quad
        \quad
      }
    \mathrm{Mult}(M_{00})
    \end{equation}
    for the multiplication map $\mathrm{mult}\: M_{00} \otimes M_{00} \to M_{00}$. The compositions in these equalities are interpreted via multipliers as described on \cite[p.438]{zbMATH00869786}.
    The proof of \Cref{th:is.kac.bis} can now effectively be run in reverse: the defining property \Cref{eq:what.antipodes.do} of the antipode is essentially just condition \Cref{eq:char.antp}. \qedhere
  \end{enumerate}
\end{th:dqg.nk}


\addcontentsline{toc}{section}{References}

\def\polhk#1{\setbox0=\hbox{#1}{\ooalign{\hidewidth
  \lower1.5ex\hbox{`}\hidewidth\crcr\unhbox0}}}

\Addresses

\end{document}